\newcommand{\from}{\colon}
\newcommand{\embed}{\hookrightarrow}
\newcommand{\overbar}[1]{\mkern
  1.5mu\overline{\mkern-1.5mu#1\mkern-1.5mu}\mkern 1.5mu}
\newcommand{\conj}[1]{\overbar{#1}}
\newcommand{\C}{\mathbb{C}}
\newcommand{\Q}{\mathbb{Q}}
\newcommand{\Z}{\mathbb{Z}}
\newcommand{\F}{\mathbb{F}}
\newcommand{\Aff}{\mathbb{A}}
\newcommand{\Crv}{\mathcal{C}}
\newcommand{\A}{\mathcal{A}}
\renewcommand{\phi}{\varphi} %This renewcommand is OK.
\newcommand{\eps}{\varepsilon}
\newcommand{\id}[1]{\mathfrak{#1}}
\newcommand{\p}{\mathfrak{p}}
\renewcommand{\L}{\mathcal{L}}
\renewcommand{\H}{\mathcal{H}}
\DeclareMathOperator{\GL}{GL}
\DeclareMathOperator{\Tr}{Tr}
\DeclareMathOperator{\End}{End}
\DeclareMathOperator{\NS}{NS}
\DeclareMathOperator{\im}{Im}
\DeclareMathOperator{\Rend}{\smash{\End^\dagger}}
\DeclareMathOperator{\Disc}{Disc}
\DeclareMathOperator{\Aut}{Aut}
\DeclareMathOperator{\dd}{\mathit{d}}
\DeclareMathOperator{\chr}{char}
\DeclareMathOperator{\Rec}{Rec}
\DeclareMathOperator{\Otilde}{\mathit{\smash{\widetilde{O}}}}
\newcommand{\ros}[1]{\smash{#1^\dagger}}
\renewcommand{\mod}{\ensuremath{\ \mathrm{mod}\ }}
\DeclarePairedDelimiter{\set}{\{}{\}}
\DeclarePairedDelimiter{\abs}{|}{|}
\DeclarePairedDelimiter{\pair}{\langle}{\rangle}
\DeclarePairedDelimiter{\gen}{\langle}{\rangle}
\newcommand{\mat}[4]{\left(\begin{matrix}#1&#2\\#3&#4\end{matrix}\right)}
\newtheorem{thm}{Theorem}[section]
\crefname{thm}{Theorem}{Theorems}
\crefname{lem}{Lemma}{Lemmas}
\newtheorem{prop}[thm]{Proposition}
\crefname{prop}{Proposition}{Propositions}
\theoremstyle{definition}
\newtheorem{defn}[thm]{Definition}
\crefname{defn}{Definition}{Definitions}
\newtheorem{rem}[thm]{Remark}
\crefname{algo}{Algorithm}{Algorithms}
\crefname{hyp}{Hypothesis}{Hypotheses}
\title[Counting points on abelian surfaces with Elkies's
  method]{Counting points on abelian surfaces over finite fields with
    Elkies's method}
\author{Jean Kieffer}
\date{\today}
\begin{document}
\maketitle

\vspace{-2em} %Adjust to make tables look right

\begin{abstract}
  We generalize Elkies's method, an essential ingredient in the SEA
  algorithm to count points on elliptic curves over finite fields of
  large characteristic, to the setting of p.p.~abelian surfaces. Under
  reasonable assumptions related to the distribution of Elkies primes,
  we obtain improvements over Schoof's method in two cases. If the
  abelian surface~$A$ over~$\F_q$ has RM by a fixed quadratic
  field~$F$, we reach the same asymptotic
  complexity~$\Otilde_F(\log^4 q)$ as the~SEA algorithm up to constant
  factors depending on~$F$. If~$A$ is defined over a number field, we
  count points on~$A$ modulo sufficiently many primes
  in~$\Otilde(\log^6q)$ binary operations on average. Numerical
  experiments demonstrate the practical usability of our methods.
\end{abstract}

\section{Introduction}

In this paper, we consider the problem of \emph{point counting} for
principally polarized (p.p.)~abelian varieties over finite fields:
given a p.p.~abelian variety~$A$ over~$\F_q$, we aim to compute the
characteristic polynomial of Frobenius~$\chi(A)\in \Z[X]$. If~$A$ is
an elliptic curve, this is equivalent to computing~$\# A(\F_q)$.

The motivation behind this challenge comes from different
directions. Counting points is a prerequisite for elliptic and
hyperelliptic-curve
cryptography~\cite{koblitz_EllipticCurveCryptosystems1987,
  koblitz_HyperellipticCryptosystems1989}. More recently, the hardness
of the point counting problem itself was proposed as a source of
cryptographic
protocols~\cite{dobson_TrustlessUnknownorderGroups2021}. From a
more mathematical point of view, if~$A$ is defined over a number
field, then counting points on~$A$ modulo primes of good reduction
determines the Euler factors of the $L$-function attached to~$A$.

To this date, Schoof's polynomial-time
algorithm~\cite{schoof_EllipticCurvesFinite1985,pila_FrobeniusMapsAbelian1990}
remains the central approach to point counting for abelian varieties
of dimension~$2$ or more over finite fields of large characteristic,
and much work has been devoted to making this algorithm
practical~\cite{gaudry_CountingPointsHyperelliptic2000,
  gaudry_CountingPointsGenus2011, gaudry_GenusPointCounting2012,
  abelard_ImprovedComplexityBounds2019}. In the case of abelian
surfaces over~$\F_q$, the complexity of Schoof's method
is~$\Otilde(\log^8q)$ binary operations in general,
and~$\Otilde(\log^5q)$ binary operations if the abelian surfaces
have explicit real multiplication~(RM) by a fixed quadratic
field. Note however that Schoof's approach is in competition with
cohomological algorithms, surveyed
in~\cite{kedlaya_ComputingZetaFunctions2004}, when the base field has
small characteristic; in the context of computing $L$-functions, it is
in competition with average polynomial time algorithms based on
Hasse-Witt matrices~\cite{harvey_CountingPointsHyperelliptic2014,
  harvey_ComputingHasseWittMatrices2016,
  sutherland_CountingPointsSuperelliptic2020}.

Schoof's approach is multi-modular: for a series of small
primes~$\ell \neq p$, the reduction of~$\chi(A)$ mod~$\ell$ is
computed as the characteristic polynomial of Frobenius on
the~$\ell$-torsion subgroup~$A[\ell]$, the latter being defined by
explicit polynomial equations. The algorithm stops when sufficient
information is collected to reconstruct~$\chi(A)$ using the Weil
bounds and the Chinese remainder theorem.

In the case of elliptic curves,
Elkies~\cite{elkies_EllipticModularCurves1998} showed how to
accelerate Schoof's algorithm by replacing~$A[\ell]$, the kernel of
the \emph{endomorphism}~$[\ell]$, by the kernel of
an~\emph{isogeny}~$\phi\from A\to A'$ of degree~$\ell$. Such an
isogeny will exist as soon as~$\chi(A)$ splits in linear factors
modulo~$\ell$; we say that~$\ell$ is~\emph{Elkies} in this case.
Heuristically, about half of the small primes are Elkies for a
given~$A$; this heuristic is true on average, either for all elliptic
curves over a given finite
field~\cite{shparlinski_DistributionAtkinElkies2014}, or, assuming
GRH, for all reductions of a given elliptic curve over a number field
modulo primes of good
reduction~\cite{shparlinski_DistributionAtkinElkies2015}. Then the
resulting point counting algorithm will run in~$\Otilde(\log^4q)$
binary operations, instead of~$\Otilde(\log^5q)$. Elkies's method is
an essential part of the SEA
algorithm~\cite{schoof_CountingPointsElliptic1995}, implemented in
both Pari/GP~\cite{theparigroup_PariGPVersion2019} and
Magma~\cite{bosma_MagmaAlgebraSystem1997}.

The central player in Elkies's method is the classical modular
polynomial~$\Phi_\ell$ of level~$\ell$, an explicit polynomial
equation cutting out the moduli space of pairs of~$\ell$-isogenous
elliptic curves.  More precisely, Elkies's method relies on three main
ingredients: first, upper bounds on the degree and height
of~$\Phi_\ell$~\cite{cohen_CoefficientsTransformationPolynomials1984,
  broker_ExplicitHeightBound2010}; second, an evaluation
algorithm, to compute~$\Phi_\ell(j, Y)$ as well as its derivative
$\partial_X \Phi_\ell(j, X)$ for a given value
of~$j\in\F_q$~\cite{enge_ComputingModularPolynomials2009,
  broker_ModularPolynomialsIsogeny2012,
  sutherland_EvaluationModularPolynomials2013}; and third, an isogeny
algorithm to recover~$\phi$ as an explicit rational map from this
data~\cite{elkies_EllipticModularCurves1998,
  bostan_FastAlgorithmsComputing2008}.

The recent series of papers~\cite{kieffer_DegreeHeightEstimates2022,
  kieffer_EvaluatingModularEquations2021,
  kieffer_ComputingIsogeniesModular2019} extend all these three
ingredients to the context of p.p.~abelian surfaces. In this setting,
the classical modular polynomials are replaced by modular equations
for abelian surfaces, as described
in~\cite{broker_ModularPolynomialsGenus2009,
  milio_QuasilinearTimeAlgorithm2015,
  martindale_HilbertModularPolynomials2020,
  milio_ModularPolynomialsHilbert2020}. Here we reap the benefits of
these works and describe their consequences on the point counting
problem under heuristics related to the distribution of Elkies
primes. We separate two cases depending on the moduli space of abelian
surfaces we wish to consider. In the \emph{Siegel case}, we assume
nothing a priori on our abelian surfaces; in the \emph{Hilbert case},
we fix a real quadratic field~$F$ and we only consider p.p.~abelian
surfaces with RM by its ring of integers~$\Z_F$ (but the action
of~$\Z_F$ is not assumed to be explicitly computable).  In the Hilbert
case, we reach the same asymptotic complexity as the SEA algorithm up
to constant factors depending on~$F$.

\begin{thm}
  \label{thm:main-hilbert}  
  Let~$F$ be a real quadratic field, and let~$\eps>0$. Then there
  exists an algorithm which, given a prime power~$q = p^r$
  with~$r = o(\log p)$, and given the Igusa invariants of a
  p.p.~abelian surface~$A$ over~$\F_q$ with real multiplication
  by~$\Z_F$ for which a proportion~$\eps$ of primes are Elkies
  (see \cref{def:elkies-hilbert}), computes~$\chi(A)\in \Z[X]$ in
  $\Otilde_{F,\eps}(\log^4 q)$ binary operations.
\end{thm}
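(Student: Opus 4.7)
The plan is to adapt the multi-modular structure of Schoof's algorithm, replacing the role of the $\ell$-torsion subgroup by the kernel of an isogeny of (much smaller) degree $N(\mathfrak{l})$, for a prime $\mathfrak{l}$ of $\Z_F$ that is Elkies in the sense of the referenced definition. The overall architecture is: (i) collect Elkies primes $\mathfrak{l}$ of $\Z_F$ up to a bound $B$ until $\prod N(\mathfrak{l})$ exceeds the Weil bound on $\chi(A)$; (ii) for each such $\mathfrak{l}$, use the Hilbert modular equations of \cite{milio_ModularPolynomialsHilbert2020} to find an $\mathfrak{l}$-isogenous neighbour $A' = A/K$, reconstruct the isogeny $\phi\from A\to A'$ and its kernel $K\subset A[\mathfrak{l}]$ explicitly; (iii) read off the action of Frobenius on $K$, which, since $\mathfrak{l}$ is Elkies, is multiplication by some eigenvalue $\lambda\in \Z_F/\mathfrak{l}$ and thus determines $\chi(A) \pmod{\mathfrak{l}}$; (iv) recombine via CRT to obtain $\chi(A)\in\Z[X]$.

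For the prime selection in step (i), the Elkies-density hypothesis gives $\Omega_\eps(B/\log B)$ Elkies primes below norm $B$, whose norms multiply to a quantity of logarithm $\Omega_\eps(B)$; since the Weil bound forces $\log\bigl(\prod N(\mathfrak{l})\bigr)\gg \log q$, it suffices to take $B = O_{F,\eps}(\log q)$. For step (ii), I would invoke the degree and height estimates of \cite{kieffer_DegreeHeightEstimates2022}, apply the evaluation algorithm of \cite{kieffer_EvaluatingModularEquations2021} to compute the specialized Hilbert modular equation and its derivatives at the given Hilbert/Gundlach invariants of $A$, and then call the isogeny-reconstruction algorithm of \cite{kieffer_ComputingIsogeniesModular2019} to recover $\phi$ and $K$ in the expected complexity $\Otilde_F(N(\mathfrak{l})^c)$ operations in $\F_q$ for a suitable small $c$. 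For step (iii), once a defining equation for $K$ is at hand, computing the Frobenius eigenvalue reduces to a modular exponentiation in a residue ring of size $\Otilde(N(\mathfrak{l}))$ over $\F_q$, contributing $\Otilde(N(\mathfrak{l})\log q)$ field operations.

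Summing, the cost per Elkies prime should balance out to $\Otilde_F(\log^3 q)$ binary operations—the dominant term being the isogeny computation converted to binary complexity via $M(\log q) = \Otilde(\log q)$—and multiplying by the $O_\eps(\log q / \log\log q)$ primes needed gives the announced bound $\Otilde_{F,\eps}(\log^4 q)$. The constraint $r = o(\log p)$ ensures that arithmetic in $\F_q$ remains quasi-linear in $\log q$ and that the characteristic does not interfere with the isogeny algorithms (which, in the small-characteristic regime, would require avoiding inseparability issues in the normalised modular equations).

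The principal obstacle, as I see it, is verifying that the evaluation and isogeny-computation algorithms of the three cited works actually compose with the right complexity bounds under the Hilbert setting: their outputs must be usable as black-box inputs to the Frobenius action computation, and the hidden constants must depend only on $F$ (and not on the specific surface $A$). A secondary technical point is handling the two possible splitting types of $\mathfrak{l}$ above a rational prime uniformly, and ensuring that enough information modulo $\mathfrak{l}$ is extracted even when $\mathfrak{l}$ is inert (so that $\Z_F/\mathfrak{l}$ is a quadratic extension and the Elkies condition must be formulated with respect to the $\Z_F$-action rather than just the rational structure). Provided these integration issues go through, the CRT recombination and Weil-bound accounting are routine.
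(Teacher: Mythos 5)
Your architecture matches the paper's: multi-modular, Elkies primes of $\Z_F$, Hilbert modular equations to find an isogenous neighbour, explicit reconstruction of $\ker f$, Frobenius eigenvalue computation, and a final CRT step. The three cited Kieffer references are exactly the tools the paper deploys. But there are two points where your exposition glosses over the structural idea that makes the Hilbert case work, and one smaller error in the complexity bookkeeping.

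The main gap is that what a single Elkies prime $\mathfrak{l}=(\beta)$ yields is not ``$\chi(A)\pmod{\mathfrak{l}}$''. The eigenvalue $\lambda$ of $\pi_A$ on the order-$\ell$ group $\ker f$ gives only one quadratic factor $(X-\lambda)(X-q/\lambda)$ of $\chi(A)\bmod\ell$; the complementary factor depends on the $\bar\beta$-component and is not determined by this isogeny. The correct invariant is the \emph{real Frobenius} $\psi_A=\pi_A+\ros{\pi_A}\in\Z_F$, of which you learn $\psi_A\equiv \lambda+q/\lambda \pmod{\beta}$. The CRT must then be carried out in $\Z_F$ modulo $\id{B}=\prod(\beta_i)$, recovering $\psi_A$, and only afterwards is $\chi(A)$ read off from the characteristic polynomial of $\psi_A$. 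The ``routine Weil-bound accounting'' is then the paper's \cref{prop:crt-hilbert}: it uses the specific bounds $|\Tr(\psi_A)|\leq 4\sqrt q$ and $\Disc(\Z[\psi_A])\leq 4q$ to show that $N(\id{B})>16q$ suffices for uniqueness. This step is not interchangeable with naive CRT over $\Z$ on the coefficients of $\chi(A)$, since a single Elkies $\beta$ does not give $\chi(A)\bmod\ell$; this is precisely why only one of $\beta,\bar\beta$ needs to be Elkies in \cref{def:elkies-hilbert}.

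On the ``two splitting types'': there is nothing to handle uniformly, because inert and ramified primes are simply discarded. The algorithm restricts to rational primes $\ell$ that split as $(\beta)(\bar\beta)$ with both factors \emph{principal and generated by totally positive elements} (i.e.~trivial in the narrow class group); \v{C}ebotarev gives a positive density of such primes, which is all that is needed. Formulating an Elkies condition for inert $\mathfrak{l}$ would put you back in a degree-$\ell^2$-kernel regime and lose the complexity gain. Finally, the dominant per-prime cost is not the isogeny reconstruction (which is $\Otilde_F(\sqrt{\ell})$, using $\beta$ of trace $O_F(\sqrt{\ell})$), but rather the modular-equation evaluation at $\Otilde_F(\ell^2\log q)$ and the Frobenius eigenvalue computation at $\Otilde(\ell\log^2q)$ — both $\Otilde(\log^3q)$ at $\ell=O(\log q)$ — so your final bound is correct but attributed to the wrong step.
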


In the Siegel case, it turns out that Elkies's method brings no
complexity improvement (except perhaps for logarithmic factors) over
Schoof's method for a general abelian surface~$A$
over~$\F_q$. However, it does bring an improvement when the invariants
of~$A$ admit lifts in characteristic zero of small heights. The
exponent in the complexity estimate is further decreased if we wish to
count points modulo sufficiently many primes at once.

\begin{thm}
  \label{thm:main-siegel}
  Let~$K$ be a number field, and let~$\eps>0$. Then:
  \begin{enumerate}
  \item There exists an algorithm which, given~$H\geq 0$, given a
    p.p.~abelian surface~$A$ over~$K$ whose Igusa invariants are
    well-defined and have height at most~$H$, and given a prime
    ideal~$\p$ of~$K$ of norm~$q$ such that~$A$ has good reduction
    at~$\p$ and a proportion~$\eps$ of primes are Elkies for its
    reduction~$A_\p$ (see \cref{def:elkies-siegel}),
    computes~$\chi({A_\p})\in \Z[X]$ in $\Otilde_{K,\eps}(H \log^7 q)$ binary
    operations.
  \item There exists an algorithm which, given~$H\geq 0$ and
    $q\geq 1$, given a p.p.~abelian surface~$A$ over~$K$, and
    given~$\Theta(H\log q)$ many distinct primes $\p_1,\ldots,\p_n$
    of~$K$ such that~$\log N(\p_i) = O(\log q)$, such that~$A$ has
    good reduction at all primes~$\p_i$, and such that a
    proportion~$\eps$ of primes are Elkies for each of its
    reductions~$A_{\p_i}$, computes all characteristic
    polynomials~$\chi({A_{\p_i}})\in \Z[X]$
    using~$\Otilde_{K,\eps}(\log^6 q)$ binary operations on average
    for each~$i$.
  \end{enumerate}
\end{thm}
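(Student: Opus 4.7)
The plan is to adapt Elkies's method to the Siegel moduli space of p.p.\ abelian surfaces, assembling $\chi(A_\p)\in\Z[X]$ from its reductions modulo enough small primes $\ell$. By the Weil bounds the coefficients of $\chi(A_\p)$ are $O(q^2)$, so CRT reduces the problem to computing $\chi(A_\p)\bmod\ell$ for a set of primes $\ell$ of size $B=O(\log q)$ whose product exceeds a fixed power of $q$; under the Elkies-density hypothesis this amounts to $\Theta(\log q/\log\log q)$ Elkies primes. For each such $\ell$, I would run the abelian-surface Elkies routine, which assembles three ingredients from the earlier papers \cite{kieffer_DegreeHeightEstimates2022,kieffer_EvaluatingModularEquations2021,kieffer_ComputingIsogeniesModular2019}: first, evaluate the Siegel modular equation $\Phi_\ell$ at the Igusa invariants of $A$ to obtain a univariate polynomial over $\F_q$ whose roots are the invariants of the $\ell$-isogenous surfaces; second, reconstruct the Elkies kernel $K\subset A_\p[\ell]$ as a subgroup scheme via the isogeny algorithm; third, compute the action of Frobenius on $K$ (and on a companion kernel if needed) to recover the eigenvalues modulo $\ell$.

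The decisive point in the Siegel setting is that the modular equations $\Phi_\ell$ have cubic degree and linear height in $\ell$, so direct evaluation over $\F_q$ would dominate the total complexity and offer no gain over Schoof. For part~(1) I would instead evaluate $\Phi_\ell$ at the characteristic-zero Igusa invariants of $A$ over $K$ using the algorithm of \cite{kieffer_EvaluatingModularEquations2021}, whose bit-complexity scales linearly in the height $H$ of the input (times a polynomial factor in $\ell$), and only then reduce the resulting polynomial modulo~$\p$. Summing the per-prime cost of the modular evaluations, together with the isogeny reconstruction and the Frobenius-eigenvalue extraction over $\F_q$, across all Elkies primes $\ell\le B$, is intended to balance out to the claimed $\Otilde_{K,\eps}(H\log^7 q)$ bound.

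For part~(2) the crucial remark is that the characteristic-zero evaluation of $\Phi_\ell$ at the invariants of $A$ is independent of the prime~$\p_i$: the output lives in $K[Y_1,Y_2,Y_3]$ and can then be reduced modulo each $\p_i$ at negligible marginal cost. Treating $\Theta(H\log q)$ primes simultaneously allows the dominant $K$-side evaluation cost to be amortised over the primes, after which the surviving per-prime cost comes entirely from the $\F_q$-side steps (mod-$\p_i$ reductions, root finding, isogeny reconstruction, and Frobenius action), yielding the average $\Otilde_{K,\eps}(\log^6 q)$ per~$\p_i$.

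The main obstacle is purely bookkeeping: one has to track precisely the degree and height bounds from \cite{kieffer_DegreeHeightEstimates2022}, feed them into the evaluation complexity of \cite{kieffer_EvaluatingModularEquations2021} and the isogeny-reconstruction complexity of \cite{kieffer_ComputingIsogeniesModular2019}, sum the costs over $\ell\le B=O(\log q)$ with the Elkies-density hypothesis, and verify that everything lands exactly on the exponents $7$ and $6$. Secondary issues—non-maximal endomorphism rings, kernels with repeated eigenvalues modulo $\ell$, invariants at which $\Phi_\ell$ degenerates—are absorbed into the proportion $\eps$ by discarding the offending primes at bounded asymptotic cost.
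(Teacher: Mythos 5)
Your proposal matches the paper's argument in all essentials: for each prime $\ell = O_\eps(\log q)$, evaluate the Siegel modular equations once over $K$ via \cref{thm:eval-siegel} at cost $\Otilde_K(\ell^6 H)$, reduce mod~$\p$, find a root, build the $\ell$-isogeny via \cref{thm:isog-siegel}, compute Frobenius on its kernel using the Gr\"obner-basis methods of~\S\ref{subsec:schoof} together with \cref{prop:siegel-elkies-charpoly}, and CRT over $\Theta(\log q/\log\log q)$ Elkies primes; for part~(2) the over-$K$ evaluation is shared across all $\p_i$ and amortises over $\Theta(H\log q)$ primes exactly as you say. One factual slip that does not affect the structure: the height of $\Psi_{\ell,k}$ is $O(\ell^3\log\ell)$ by \cref{thm:height-siegel}, not linear in $\ell$; what matters for the complexity is the $\Otilde_K(\ell^6 H)$ bound of the evaluation algorithm, which you invoke correctly.
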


We have released an implementation of the key step of the above
algorithms in terms of running time, namely the evaluation of modular
equations~\cite{kieffer_HDMELibraryEvaluation}, building on the C
libraries Flint~\cite{hart_FLINTFastLibrary} and
Arb~\cite{johansson_ArbEfficientArbitraryprecision2017}. This allows
us to roughly estimate the total cost of the above point-counting
algorithms in practice.

As remarked
in~\cite[§4.2.1]{dobson_TrustlessUnknownorderGroups2021},
dimension~$2$ is the largest dimension where Elkies's method can be
superior to Schoof's algorithm for generic abelian varieties, at least
in the asymptotic sense. However, Elkies's method still seems
promising in the context of counting points on p.p.~abelian varieties
with fixed RM in any dimension.

This paper is organized as follows. In \cref{sec:background}, we
quickly review previous results on Schoof's method for abelian
surfaces. In \cref{sec:siegel,sec:hilbert}, we describe Elkies's
method for p.p.~abelian surfaces in the Siegel and Hilbert case
respectively. Experimental results appear in \cref{sec:exp}. Finally,
\cref{sec:implem} presents possible directions to further reduce the
cost of point counting for abelian surfaces in practice.

\subsection*{Acknowledgements} I am deeply indebted to his former
advisors Damien Robert and Aurel Page for suggesting the thesis
project that led to this work. I also thank Noam Elkies, John Voight
and Andrew Sutherland for their insightful comments on this
work. Finally, I thank the LMFDB team for allowing me to access their
computational resources.

\section{Background on point-counting algorithms}
\label{sec:background}

\subsection{The characteristic polynomial of Frobenius}

Let~$A$ be a p.p.~abelian surface over~$\F_q$, and denote its
Frobenius endomorphism by~$\pi_A$. The
characteristic polynomial~$\chi(A)$ of~$\pi_A$ takes the form
\begin{equation}
  \label{eq:charpoly-schoof}
  \chi(A) = X^4 - s_1X^3 + (s_2+2q) X^2 - q s_1 X + q^2,
\end{equation}
where~$s_1,s_2$ are integers satisfying the following
inequalities~\cite{weil_CourbesAlgebriquesVarietes1948},
\cite[Lem.~3.1]{ruck_AbelianSurfacesJacobian1990}:
\begin{equation}
  \label{eq:weil-rueck}
  \abs{s_1}\leq 4\sqrt{q},\quad \abs{s_2}\leq 4q,
  \quad s_1^2 - 4s_2\geq 0,\quad s_2 + 4q \geq 2\abs{s_1}.
\end{equation}
Denote the Rosati involution on~$\End(A)$ induced by the principal
polarization of~$A$ by~$\dagger$. Then the \emph{real
  Frobenius}~$\psi_A = \pi_A+\ros{\pi_A}$ is an element of the
subgroup~$\Rend(A)$ of real endomorphisms of~$A$. Its characteristic
polynomial is
\begin{equation}
  \label{eq:charpoly-rm}
  \xi(A) = X^2 - s_1X + s_2.
\end{equation}

If~$\ell\neq q$ is a prime, then~$A[\ell] \subset A$ is a finite étale
group scheme isomorphic to~$(\Z/\ell\Z)^4$, and we identify it with
its set of points over an algebraic closure of~$\F_q$. The reduction
of~$\chi(A)$ modulo~$\ell$ is the characteristic polynomial of~$\pi_A$
acting on~$A[\ell]$. Recall that~$A[\ell]$ is endowed with the Weil
pairing, an alternating and nondegenerate bilinear form induced by the
principal polarization of~$A$; we denote it by
$(x,y)\mapsto \pair{x,y} \in \Z/\ell\Z$. For all~$x,y\in A[\ell]$, we
have
\begin{equation}
  \label{eq:frob-pairing}
  \pair{\pi_A(x),\pi_A(y)} = q\pair{x,y}.
\end{equation}
The Rosati involution is equal to adjunction with respect to the Weil
pairing, so~\eqref{eq:frob-pairing} translates to the equality
$\pi_A\ros{\pi_A} = q$.

Assume now that~$A$ has RM by~$\Z_F$, where~$F$ is a real quadratic
field; this means that~$A$ is equipped with an
embedding~$\Z_F\embed \Rend(A)$. Let~$\ell\in\Z$ be a prime which
splits in~$F$ in a product of two principal ideals, generated
by~$\beta,\conj{\beta}\in \Z_F$. We then have an orthogonal
decomposition
\begin{equation}
  \label{eq:rm-splitting}
  A[\ell] = A[\beta]\oplus A[\conj{\beta}],
\end{equation}
and both~$A[\beta]$ and~$A[\conj{\beta}]$ are stable
under~$\pi_A$,~$\ros{\pi_A}$ and~$\psi_A$. Since~$A[\beta]$
and~$A[\conj{\beta}]$ are not isotropic, the determinant of~$\pi_A$ on
both of these subspaces is~$q$.

\subsection{Schoof's method in dimension 2}

\label{subsec:schoof}

Recall that any p.p.~abelian surface~$A$ over~$k = \F_q$ is either a
product of two elliptic curves or the Jacobian of a hyperelliptic
genus~$2$ curve~$\Crv$ defined over~$k$. In the point-counting
context, we only have to consider this second case.
The fundamental building block of Schoof's
method~\cite{gaudry_CountingPointsHyperelliptic2000,
  gaudry_GenusPointCounting2012} is to be able to work with the
torsion subgroups~$A[\ell]$ in a computationally efficient way. Since
Elkies's method ultimately involves computations with subgroups of
Jacobians as well, the computational techniques developed for Schoof's
method will still apply in our context.

The first step is to choose birational coordinates on~$A$. A popular
choice is to consider Mumford coordinates. Let~$y^2 = P(x)$ be an
equation of~$\Crv$, and let~$K_\Crv$ be the canonical divisor
of~$\Crv$. A generic point of~$A$ is linearly equivalent to~$D-K_\Crv$
for a unique degree-two divisor~$D$ on~$\Crv$; in turn, a generic
divisor of degree two can be written as the zero locus of polynomials
of the form~$x^2 + u_1 x + u_0$ and~$y - v_1x - v_0$ in a unique
way. This defines the Mumford coordinates~$(u_0,u_1,v_0,v_1)$ as a
rational map from~$A$ to the affine space~$\Aff_k^4$, and~$A$ is
birational to its image. Denote the coordinate ring of~$\Aff_k^4$
by~$k[U_0,U_1,V_0,V_1]$.

If~$S$ is any finite subgroup of~$A$, then~$S$ is stable
by~$D\mapsto i(D)$ where~$i(D)$ denotes the hyperelliptic involution,
and hence by change of sign of~$v$-coordinates. Assume that~$S$ is
\emph{generic} in the sense that Mumford coordinates are well-defined
at all points of~$S$ and that all pairs~$\set{D,i(D)}$ in~$S$ have
distinct~$u_1$-coordinates. Then the Gröbner basis cutting out~$S$ in
terms of Mumford coordinates, in the monomial
ordering~$U_1 < U_0 < V_1 < V_0$, will take the convenient form
\begin{equation}
  \label{eq:groebner-basis}
  \begin{cases}
    V_0 - V_1 S_0(U_1) = 0,\\
    V_1^2 - S_1(U_1) = 0, \\
    U_0 - R_0(U_1) = 0, \\
    R_1(U_1) = 0
  \end{cases}
\end{equation}
for some univariate polynomials~$R_1,R_0,S_1,S_0\in k[U_1]$. The
methods of~\cite[§3]{gaudry_GenusPointCounting2012} describe how to
compute this Gröbner basis when $S = A[\ell]$, assuming that the
equation of~$\Crv$ is given by a polynomial~$P$ of degree five, in
other words that~$\Crv$ admits a rational Weierstrass
point~$\infty$. The input of their algorithm is given by Cantor's
division polynomials, which provide an explicit description of the
following composition as a rational map:
\begin{displaymath}
  \begin{tikzcd}
    \Crv \ar[rr, "{p \mapsto [p]-[\infty]}"] && A \ar[r, "{[\ell]}"] &
    A \ar[rr, "{(u_0,u_1,v_0,v_1)}"] && \Aff^4.
  \end{tikzcd}
\end{displaymath}
More generally, if~$A'$ is the Jacobian of another hyperelliptic
genus~$2$ curve~$\Crv'$ over~$k$, is~$f\from A\to A'$ is any isogeny,
and if~$p_0\in \Crv(k)$ is any rational point, then the same methods
will compute a Gröbner basis describing~$\ker(f)\subset A$ given the
explicit expression of the composed map
\begin{equation}
  \label{eq:rational-map}
  \begin{tikzcd}    
    \Crv \ar[rr, "{p \mapsto [p]-[p_0]}"] && A \ar[r, "{f}"] &
    A' \ar[rr, "{(u_0,u_1,v_0,v_1)}"] && \Aff^4.
  \end{tikzcd}
\end{equation}

If the degrees of these rational fractions is bounded above by~$d$,
then the whole Gröbner basis computation takes~$\Otilde(d^3)$
operations in~$k$, hence~$\Otilde(d^3\log q)$ binary operations. Note
that a complexity~$\Otilde(d^{3-1/\omega} \log q)$, where~$\omega$
denotes the exponent of matrix multiplication, could probably be
achieved by computing bivariate resultants using an algorithm of
Villard~\cite{villard_ComputingResultantGeneric2018a} instead of the
more classical evaluation-interpolation method. The resulting
polynomials in~\eqref{eq:groebner-basis} have degree~$O(d^2)$.

Once the Gröbner basis~\eqref{eq:groebner-basis} is known, computing
the Frobenius endomorphism on~$S$ is simply a matter of
computing~$(U_0^q,U_1^q,V_0^q,V_1^q)$ using a square-and-multiply
algorithm, reducing the result modulo the defining ideal of~$S$ at
each step, for a total cost of~$\Otilde(d^2\log^2 q)$ binary
operations.

When running Schoof's method in the generic case, one takes $S = A[\ell]$ and
$d = O(\ell^2)$. It only remains to find the correct values of~$s_1$
and~$s_2$ in~$\Z/\ell\Z$ such that Frobenius characteristic
equation~\eqref{eq:charpoly-schoof} holds on~$A[\ell]$
\cite[Alg.~1]{gaudry_GenusPointCounting2012}. The dominant step in the
whole method is the Gröbner basis computation, which accounts for the
final complexity of~$\Otilde(\log^8q)$ binary operations. In the RM
case, one can take~$S = A[\beta]$ instead provided that~$\ell$ splits
correctly in~$\Z_F$~\cite{gaudry_CountingPointsGenus2011}. Then one
has~$d = O(\ell)$, giving a total point-counting complexity
of~$\Otilde(\log^5 q)$ binary operations; both the Gröbner step and
the Frobenius computation are asymptotically
dominant. In~\cite{gaudry_CountingPointsGenus2011}, the real
multiplication action of~$\Z_F$ is assumed to be explicitly
computable; in this case the Chinese remainder theorem can be used to
recover~$\psi_A\in \Z_F$ directly.

\subsection{Non-generic cases}

The above genericity assumption on~$S$ does not necessarily hold in
general. As detailed
in~\cite[§5]{abelard_ImprovedComplexityBounds2019}, it could fail in a
finite number of different ways: certain elements of~$S$ might be of
the form~$[p]-[\infty]$ or $2[p]-K$ for some~$p\in \Crv(k)$, so that
their Mumford coordinates are not defined; or they might take the
generic form~$[p_1] + [p_2] - K$, but the Mumford coordinates
of~$\ell([p_1]-[\infty])$ might not be defined. Each of these possible
degeneracy types can be managed by writing another polynomial system
with a smaller number of variables or lower degrees than the original
one; therefore, considering the generic case is sufficient from a
complexity-theoretic point of view. It is also sufficient from a
practical point of view: in large characteristics,~$S$ will be generic
with overwhelming probability, and any particular~$\ell$ causing
problems can simply be skipped.

\section{Elkies's method for abelian surfaces: the Siegel case}
\label{sec:siegel}

\subsection{Polarized isogenies between abelian surfaces}

Modular polynomials describing~$\ell$-isogenies between elliptic
curves play a central role in Elkies's method in
dimension~$1$. Similarly, explicit equations for moduli spaces of
suitably isogenous abelian surfaces will play a central role in
Elkies's method for p.p.~abelian surfaces. Such moduli spaces only
exist for certain isogeny types that are directly related to the
polarizations and endomorphism rings of the abelian varieties we
consider.

Recall that~$\NS(A)$ denotes the Néron--Severi group of~$A$,
consisting of line bundles on~$A$ defined over an algebraic closure of
the base field, up to algebraic equivalence.

\begin{thm}[{\cite[Prop.~17.2]{milne_AbelianVarieties1986}}]
  \label{thm:End-NS}
  Let~$A$ be a principally polarized abelian variety over a
  field~$k$. Then there is a natural isomorphism of abelian groups
  \begin{displaymath}
    \L_A \from (\Rend(A), +) \to (\NS(A), \otimes).
  \end{displaymath}
  The line bundle~$\L_A(1)$ is the unique line bundle (up to algebraic
  equivalence) defining the principal polarization of~$A$; moreover,
  for each~$\beta\in \Rend(A)$, the line bundle~$\L_A(\beta)$ is ample
  if and only if~$\beta$ is totally positive.
\end{thm}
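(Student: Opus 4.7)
The plan is to construct the inverse of $\L_A$ explicitly via the polarization morphisms of line bundles, and then reduce everything to the structural fact that $\NS(A)$ identifies with the symmetric part of $\mathrm{Hom}(A,A^\dual)$. Write $\lambda_0\from A\to A^\dual$ for the principal polarization and, for any line bundle $\L$ on $A$, write $\phi_\L\from A\to A^\dual$, $x\mapsto t_x^*\L\otimes\L^{-1}$, for its polarization map. The assignment $\L\mapsto\phi_\L$ is a group homomorphism $(\mathrm{Pic}(A),\otimes)\to(\mathrm{Hom}(A,A^\dual),+)$ whose kernel is exactly $\mathrm{Pic}^0(A)$, so it descends to an injection $\NS(A)\embed\mathrm{Hom}(A,A^\dual)$. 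Composing with the isomorphism $f\mapsto \lambda_0^{-1}\circ f$ from $\mathrm{Hom}(A,A^\dual)$ to $\End(A)$ produces the candidate inverse
\[
\mu\from \NS(A)\to \End(A),\qquad [\L]\mapsto \lambda_0^{-1}\circ \phi_\L.
\]

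The next step is to verify $\im(\mu)\subseteq\Rend(A)$. This uses the symmetry identity $\phi_\L^\dual = \phi_\L$ under the canonical isomorphism $A\cong A^{\dual\dual}$, which combined with $\lambda_0^\dual=\lambda_0$ yields $(\lambda_0^{-1}\phi_\L)^\dagger = \lambda_0^{-1}\phi_\L$ directly from the definition $\alpha^\dagger = \lambda_0^{-1}\alpha^\dual\lambda_0$ of the Rosati involution. The main obstacle is then surjectivity onto $\Rend(A)$, which amounts to showing that every symmetric morphism $f\from A\to A^\dual$ is of the form $\phi_\L$ for some line bundle $\L$. One approach is to tensor with $\Q_\ell$, identify $\NS(A)\otimes\Q_\ell$ with the symmetric part of $\mathrm{Hom}(A,A^\dual)\otimes\Q_\ell$ via the first Chern class on $H^2(A,\Q_\ell(1))$, and then descend from $\Q$ to $\Z$ using that $\NS(A)$ is saturated inside $\NS(A)\otimes\Q$. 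A more concrete alternative uses the Poincaré bundle: writing $\L_f = (\mathrm{id},f)^*\mathcal{P}$ yields $\phi_{\L_f} = f + f^\dual = 2f$, giving surjectivity after inverting $2$, followed by the same saturation argument. This is exactly the core content of the cited~\cite[Prop.~17.2]{milne_AbelianVarieties1986}.

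The remaining two assertions are then formal. By construction $\phi_{\L_A(1)} = \lambda_0\circ 1 = \lambda_0$, and injectivity of $\mu$ forces $\L_A(1)$ to be the unique class in $\NS(A)$ whose polarization map is $\lambda_0$, i.e., the unique class defining the principal polarization. For the ampleness criterion I would appeal to the classical fact that a symmetric line bundle $\L$ on an abelian variety is ample if and only if the associated symmetric real bilinear form on $\Rend(A)\otimes\R$, defined via Riemann--Roch and the Rosati involution, is positive definite at $\mu([\L])$. Since $\Rend(A)\otimes\R$ is a product of copies of $\R$ (a totally real étale $\R$-algebra) and this form corresponds to the trace pairing evaluated at $\beta$, positive-definiteness is equivalent to $\beta$ being totally positive, which is the desired criterion.
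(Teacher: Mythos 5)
The paper does not prove this statement; it is cited as background from Milne~\cite[Prop.~17.2]{milne_AbelianVarieties1986}, so there is no proof in the paper to compare against. Your sketch reproduces the standard argument behind that citation---building $\mu=\lambda_0^{-1}\circ\phi_{(\cdot)}$, using symmetry of $\phi_\L$ to land in $\Rend(A)$---and the key structural fact (surjectivity of $\L\mapsto\phi_\L$ onto symmetric morphisms $A\to A^\dual$) is exactly what you correctly flag as the substance of the reference.

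Two caveats on your write-up. First, the surjectivity arguments you offer are not self-contained. ``$\NS(A)$ is saturated inside $\NS(A)\otimes\Q$'' is tautological; what is actually needed is that $\NS(A)$ is a saturated subgroup of the symmetric part of $\mathrm{Hom}(A,A^\dual)$, i.e.\ that the cokernel of the map $\L\mapsto\phi_\L$ is torsion-free. Similarly, the Poincar\'e-bundle identity $\phi_{(\mathrm{id},f)^*\mathcal{P}}=2f$ only shows the cokernel is killed by $2$, not that it vanishes; an extra input is required (for instance: any symmetric $f$ has $f+n\lambda_0$ a polarization for $n\gg0$, and over $\bar k$ polarizations are of the form $\phi_\M$ for $\M$ ample, whence $f=\phi_{\M\otimes\L_A(1)^{-n}}$). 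Since you explicitly defer to Milne here, this is not fatal, but as written the sketch does not close the gap.

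Second, and more substantively, the assertion that ``$\Rend(A)\otimes\R$ is a product of copies of $\R$ (a totally real \'etale $\R$-algebra)'' is false for a general p.p.\ abelian variety, which is the generality in which \cref{thm:End-NS} is stated. For example, $A=E\times E$ with $\End(E)=\Z$ has $\End(A)=M_2(\Z)$ and $\Rend(A)\otimes\R\cong\Sym_2(\R)$, which is three-dimensional and not \'etale. Positivity of the Rosati form makes $\Rend(A)\otimes\R$ a formally real (Euclidean) Jordan algebra, i.e.\ a product of simple factors of type $\mathrm{Herm}_n(\R)$, $\mathrm{Herm}_n(\C)$, $\mathrm{Herm}_n(\mathbb{H})$, and ``totally positive'' must be read as membership in the positive cone of this Jordan algebra. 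Your description happens to be accurate for the way the paper actually uses the theorem---with $\beta$ lying in $\Z_F\subset\Rend(A)$ for a real quadratic $F$---but it does not give a correct proof of the ampleness criterion in the stated generality.
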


If~$A$ and~$A'$ are p.p.~abelian varieties of the same dimension
over~$k$, and if $\beta\in \Rend(A)$, we say that $f\from A\to A'$ is
a~\emph{$\beta$-isogeny} if $f^*\L_{A'}(1) = \L_A(\beta)$. In
particular,~$\beta$ must be totally positive. If~$\beta$ is moreover
prime to~$\chr(k)$, an isogeny~$f$ is a~$\beta$-isogeny if and only if
the following conditions are
satisfied~\cite[Thm.~1.1]{dudeanu_CyclicIsogeniesAbelian2017}:
\begin{enumerate}
\item \label{it:isotropic} $\ker(f)\subset A[\beta]$ and~$\ker(f)$ is
  maximal isotropic for the Weil pairing attached to this subgroup;
\item The image~$A'$ is endowed with the natural principal
  polarization of~$A/\ker(f)$ coming from the
  conditions~\eqref{it:isotropic}.
\end{enumerate}
Since~$\Rend(A)$ always contains a copy of~$\Z$, it makes sense to
talk about~$\ell$-isogenies for any~$\ell\in \Z_{\geq 1}$. In the case
of elliptic curves, this corresponds to the usual notion of cyclic
isogenies of degree~$\ell$; but in dimension~$g$ the degree of
an~$\ell$-isogeny is~$\ell^g$. In general, the degree of
a~$\beta$-isogeny is $(\deg\beta)^{1/2}$.

We say that two isogenies~$f_1\from A\to A'$ and~$f_2\from A\to A''$
are~\emph{equivalent} if there exists an isomorphism of p.p.~abelian
varieties~$\eta\from A'\to A''$ such that~$f_2 = \eta\circ f_1$.

\subsection{Elkies primes}
The goal of Elkies's method in the Siegel case is, given a
p.p.~abelian surface~$A$ over~$\F_q$ and a prime~$\ell$, to obtain
information on~$\chi(A)\mod\ell$ using an~$\ell$-isogeny~$f$, with
domain~$A$, defined over~$\F_p$. We say that~$\ell$ is~\emph{Elkies}
for~$A$ if such an~$f$ exists. Before explaining how to obtain such
an~$f$, let us describe how~$\chi(A)$ can be computed from the action
of~$\pi_A$ on~$\ker(f)$; and conversely, how the splitting behavior
of~$\chi(A)$ modulo~$\ell$ can guarantee that~$\ell$ is Elkies.

If~$P$ is a monic polynomial of degree~$d$ whose constant
coefficient~$a_0$ is invertible, we denote by~$\Rec_q(P)$ the monic
polynomial~$a_0^{-1} X^d P(q/X)$.

\begin{prop}
  \label{prop:siegel-elkies-charpoly}
  Assume that~$\ell$ is Elkies for~$A$, and let~$f\from A\to A'$ be
  an~$\ell$-isogeny defined over~$\F_q$. Let~$P\in \Z/\ell\Z[X]$ be
  the characteristic polynomial of~$\pi_A$ on~$\ker(f)$.
  Then~$\chi(A) = P \Rec_q(P)$ modulo~$\ell$.
\end{prop}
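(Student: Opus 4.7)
The plan is to exploit the $\pi_A$-stability of $\ker(f)$ together with the self-duality of $A[\ell]$ under the Weil pairing, which is ensured by the hypothesis that $\ker(f)$ is \emph{maximal isotropic} (following from $f$ being an $\ell$-isogeny and the characterization of such isogenies recalled above).

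First, I would observe that $\ker(f)$ is a $\pi_A$-stable subgroup of $A[\ell]$ because $f$ is defined over~$\F_q$, so we get a short exact sequence of $\F_\ell[\pi_A]$-modules
\begin{displaymath}
  0 \to \ker(f) \to A[\ell] \to A[\ell]/\ker(f) \to 0.
\end{displaymath}
Since $\chi(A)\mod\ell$ is the characteristic polynomial of~$\pi_A$ acting on~$A[\ell]$, multiplicativity of the characteristic polynomial in short exact sequences gives $\chi(A) \equiv P\cdot Q \mod \ell$, where~$Q$ is the characteristic polynomial of~$\pi_A$ on~$A[\ell]/\ker(f)$. It remains to identify $Q$ with $\Rec_q(P)$.

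Next, I would use the Weil pairing. Since $\ker(f) \subset A[\ell]$ is maximal isotropic, the Weil pairing induces a perfect pairing $\ker(f) \times A[\ell]/\ker(f) \to \mu_\ell$, yielding an isomorphism $A[\ell]/\ker(f) \cong \ker(f)^\dual$. The key computation is to transport the action of~$\pi_A$ on $A[\ell]/\ker(f)$ through this isomorphism. Using that the Rosati involution is the adjoint with respect to the Weil pairing, the action corresponds to~$(\ros{\pi_A})^\dual$ on $\ker(f)^\dual$; hence~$Q$ equals the characteristic polynomial of~$\ros{\pi_A}$ acting on~$\ker(f)$.

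Finally, I would conclude by a routine eigenvalue calculation. The relation $\pi_A\ros{\pi_A} = q$ shows that if $\alpha,\beta$ are the eigenvalues of~$\pi_A$ on~$\ker(f)$ (over an algebraic closure of~$\F_\ell$), then $\ros{\pi_A}$ has eigenvalues~$q/\alpha, q/\beta$; note that~$\alpha\beta \equiv \det(\pi_A|_{\ker(f)}) \not\equiv 0\pmod\ell$ since $q$ is invertible mod~$\ell$, so the constant coefficient of~$P$ is indeed invertible and $\Rec_q(P)$ is well-defined. Expanding $Q(X) = (X-q/\alpha)(X-q/\beta)$ and comparing with the explicit formula for $\Rec_q(P)$ then gives the desired equality $Q = \Rec_q(P)$. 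The only subtle point is the correct identification of~$\pi_A$ with~$(\ros{\pi_A})^\dual$ in the second step, which is where the adjunction property of Rosati with respect to the Weil pairing plays an essential role.
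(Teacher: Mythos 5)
Your proof is correct and takes essentially the same approach as the paper. Both hinge on the compatibility of the Weil pairing with Frobenius: the paper chooses a symplectic basis and reads off that $\pi_A$ has block upper-triangular matrix with diagonal blocks $M$ and $qM^{-t}$, while you phrase the same computation coordinate-freely by identifying $A[\ell]/\ker(f)$ with $\ker(f)^\vee$ via the pairing and transporting the $\pi_A$-action through Rosati adjunction (noting, as you should make explicit, that $\ros{\pi_A}=q\pi_A^{-1}$ stabilizes $\ker(f)$ because $\pi_A$ does).
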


\begin{proof}
  Choose a symplectic basis of~$A[\ell]$ whose first two vectors
  generate~$\ker(f)$. By~\eqref{eq:frob-pairing}, the matrix
  of~$\pi_A$ in this basis is of the form
  \begin{displaymath}
    \mat{M}{*}{0}{qM^{-t}}
  \end{displaymath}
  for some~$M\in \GL_2(\Z/\ell\Z)$; here~$M^{-t}$ denotes the inverse
  transpose of~$M$. The characteristic polynomial of~$q M^{-t}$
  is~$\Rec_q(P)$.
\end{proof}

\begin{prop}
  \label{prop:siegel-elkies-sufficient}
  Let~$\ell$ be a prime, and assume that one of the following holds:
  \begin{enumerate}
  \item \label{it:coprime-split} $\chi(A)$ splits modulo~$\ell$ as a
    product of the form~$P\Rec_q(P)$ where the polynomials~$P$
    and~$\Rec_q(P)$ are coprime;
  \item \label{it:total-split} $\chi(A)$ is totally split modulo~$\ell$.
  \end{enumerate}
  Then~$\ell$ is Elkies for~$A$.
\end{prop}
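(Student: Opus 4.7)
My plan is to translate the Elkies condition into the existence of a $\pi_A$-stable Lagrangian in~$A[\ell]$, and then construct such a subspace from the splitting data of $\chi(A) \bmod \ell$ in each case. By the characterization of $\beta$-isogenies recalled above (applied with $\beta = \ell$), an $\ell$-isogeny $f\from A \to A'$ defined over~$\F_q$ corresponds to a $\pi_A$-stable subgroup $V \subset A[\ell]$ of rank~$2$ that is maximal isotropic for the Weil pairing. Proving the proposition thus reduces to producing such a~$V$ under each hypothesis.

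The linear-algebraic backbone will be the identity $(\ker\alpha)^\perp = \ros{\alpha}(A[\ell])$ for a non-degenerate pairing, combined with the fact that $\ros{\pi_A} = q\pi_A^{-1}$ on~$A[\ell]$ (coming from $\pi_A \ros{\pi_A} = q$ and Rosati-adjointness of the Weil pairing). A short computation then yields, for any $P \in \F_\ell[X]$ with unit constant term, the identity
\begin{equation*}
  \ker(P(\pi_A))^\perp = \Rec_q(P)(\pi_A)\bigl(A[\ell]\bigr).
\end{equation*}
For hypothesis~(\ref{it:coprime-split}), I will set $V = \ker(P(\pi_A))$. The Chinese remainder theorem applied to $\chi(A) = P \cdot \Rec_q(P)$ with coprime factors gives $A[\ell] = V \oplus V'$ where $V' = \ker(\Rec_q(P)(\pi_A))$ and $\dim V = 2$; the operator $\Rec_q(P)(\pi_A)$ vanishes on~$V'$ and is invertible on~$V$, so its image equals~$V$. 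Combined with the identity above, this yields $V^\perp = V$, making~$V$ a Lagrangian; it is $\pi_A$-stable by construction.

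For hypothesis~(\ref{it:total-split}), a coprime reciprocal factorization of $\chi(A)$ need not exist---for instance, it fails whenever $\pi_A$ has $\pm\sqrt q$ as a (necessarily repeated, by non-degeneracy of the Weil pairing) eigenvalue on~$A[\ell]$. I will instead invoke the following linear-algebraic lemma, to be proved by induction on~$n$: any element of $\operatorname{GSp}_{2n}(\F_\ell)$ whose characteristic polynomial splits into linear factors over~$\F_\ell$ stabilizes a Lagrangian. The inductive step picks an $\F_\ell$-rational eigenvector~$v$ (automatically isotropic since the pairing is alternating), passes to $\pair{v}^\perp/\pair{v}$, a $(2n-2)$-dimensional symplectic space on which the ambient element still acts with split characteristic polynomial and the same similitude character, obtains a Lagrangian there by induction, and lifts it back to a stable Lagrangian of the original space containing~$v$; the base case $n = 1$ is immediate. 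Applying this lemma to $\pi_A$, which lies in $\operatorname{GSp}_4(\F_\ell)$ with similitude~$q$, produces the required $\pi_A$-stable Lagrangian.

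The main obstacle is case~(\ref{it:total-split}): a direct reduction to case~(\ref{it:coprime-split}) fails in the presence of a repeated eigenvalue $\pm\sqrt q$, so one genuinely needs the group-theoretic lemma rather than a polynomial manipulation. Once the lemma is in place, both cases close immediately.
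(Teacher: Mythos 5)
Your proof is correct and follows essentially the same route as the paper: case~\eqref{it:coprime-split} rests on the same observation that~$\Rec_q(P)(\pi_A)$ is, up to an invertible factor, the Rosati adjoint of~$P(\pi_A)$, so that~$\ker(P(\pi_A))$ is self-orthogonal; and case~\eqref{it:total-split} is the same eigenvector-and-quotient construction, which you have merely repackaged as a general $\operatorname{GSp}_{2n}$ lemma rather than carrying it out directly in dimension~$4$. The one small added value of your write-up is the explicit remark that case~\eqref{it:total-split} cannot be reduced to case~\eqref{it:coprime-split} when~$\pm\sqrt{q}$ is a repeated eigenvalue, which the paper leaves implicit.
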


Recall that the roots of~$\chi(A)$ over~$\C$ take the
form~$\lambda_1,q/\lambda_1,\lambda_2,q/\lambda_2$
where~$\lambda_1,\lambda_2$ are complex numbers of modulus~$\sqrt{q}$;
hence assumption~\eqref{it:coprime-split} means that~$\chi(A)$ splits
modulo~$\ell$ in two coprime degree~$2$ factors whose roots
are~$\set{\lambda_1,\lambda_2}$ and~$\set{q/\lambda_1,q/\lambda_2}$
respectively (up to a possible renaming
of~$q/\lambda_2\mapsto \lambda_2$). Merely assuming that~$\chi(A)$
splits in degree~$2$ factors is not sufficient to ensure that~$\ell$
is Elkies: for instance,~$A$ might be product of two elliptic curves
over~$\F_q$ for which~$\ell$ is an Atkin prime.

\begin{proof}  
  In case~\eqref{it:coprime-split}, define~$a = P(\pi_A)$
  and~$b = \Rec_q(P)(\pi_A)$ as endomorphisms of~$A[\ell]$. We have a
  decomposition of~$A[\ell]$ as~$\ker(a)\oplus \ker(b)$, and both
  subspaces have dimension~2. Let us show that~$\ker(a)$ is
  isotropic: by~\eqref{eq:frob-pairing},~$b$ is the adjoint of~$a$,
  hence
  \begin{displaymath}
    \pair{\ker(a), \ker(a)} = \pair{ \im(b), \im(b)}
    = \pair{A[\ell],\im (a b)} = 0.
  \end{displaymath}
  In case~\ref{it:total-split}, if~$v\in A[\ell]$ is an eigenvector
  of~$\pi_A$, then~$v^\perp \subset A[\ell]$ is still~$\pi_A$-stable.
  Therefore, there exists~$w\in v^\perp$ such
  that~$\gen{v}\oplus \gen{w}\subset A[\ell]$ is a $\pi_A$-stable
  subspace of dimension~$2$; it is isotropic by construction.
\end{proof}

Given \cref{prop:siegel-elkies-sufficient}, it is reasonable to expect
that a positive proportion of small primes~$\ell$ (in general, about
three in eight) will be Elkies for a given~$A$. This heuristic will
motivate the more precise \cref{def:elkies-siegel} later on.

\subsection{Modular equations of Siegel type}
\label{subsec:siegel-modeq}

Denote by~$\A_g$ the Siegel moduli space of p.p.~abelian varieties of
dimension~$g$, considered as an algebraic variety over~$\Q$. Then for
each prime~$\ell$, we have the following diagram of~$\Q$-varieties:
\begin{equation}
  \label{diag:siegel}
  \begin{tikzcd}
    & \A_g^0(\ell) \ar[ld, "p_1", swap] \ar[rd, "p_2"] & \\
    \A_g & & \A_g
  \end{tikzcd}
\end{equation}
where~$\A_g^0(\ell)$ denotes the coarse moduli space of pairs~$(A,K)$
where~$A$ is a p.p.~abelian variety of dimension~$g$
and~$K\subset A[\ell]$ is the kernel of an~$\ell$-isogeny. The
morphisms~$p_1$ and~$p_2$ are~$(A,K)\mapsto A$ and~$(A,K)\mapsto A/K$
respectively. Both~$p_1$ and~$p_2$ and are finite coverings;
moreover~$(p_1,p_2)$ realizes a birational isomorphism
between~$\A_g^0(\ell)$ and its image in~$\A_g\times\A_g$.

If~$g=2$, the graded~$\Q$-algebra of Siegel modular forms is free over
four
generators~$I_4,I_6',I_{10},I_{12}$~\cite{igusa_ArithmeticVarietyModuli1960}.
Therefore~$\A_2$ is a rational variety. The zero locus of~$I_{10}$
exactly corresponds to the locus of products of elliptic curves;
moreover, it is computationally convenient to work with coordinates
on~$\A_2$ which share a common, small-degree denominator, so a common
choice of coordinates~$j = (j_1,j_2,j_3)$ on~$\A_g$ is given by the
\emph{Igusa invariants}~\cite{streng_ComputingIgusaClass2014}, which
are scalar multiples of
\begin{equation}
  \label{eq:igusa}
  \frac{I_4 I_6'}{I_{10}}, \quad \frac{I_4 I_{12}}{I_{10}^2}, \quad \text{and}\quad
  \frac{I_4^5}{I_{10}^2}.
\end{equation}
The Igusa invariants define a local isomorphism from~$\A_2$
to~$\Aff^3$ at every point where both~$I_4$ and~$I_{10}$ are
nonzero. Hitting this singular locus may cause problems during the
point-counting algorithm. In this case, one can always consider
another set of coordinates on~$\A_2$; we postpone this
discussion to~§\ref{subsec:siegel-degenerate} below.

The \emph{Siegel modular equations} of level~$\ell$ are explicit
equations for the image of~$\A_2^0(\ell)$ in~$\A_2\times \A_2$. They
take the form of three multivariate rational
fractions~$\Psi_{\ell,k}\in \Q(J_1,J_2,J_3)[Y]$ for~$1\leq k\leq
3$. Writing~$j\circ p_1 = (j_1,j_2,j_3)$
and~$j\circ p_2 = (j_1',j_2',j_3')$, the equations of~$\A_2^0(\ell)$
are the following:
\begin{equation}
  \label{eq:siegel-modeq}
  \begin{cases}
    \Psi_{\ell,1}(j_1,j_2,j_3,j_1') = 0,\\
    \partial_X \Psi_{\ell,1}(j_1,j_2,j_3,j_1')\cdot j_k' -
    \Psi_{\ell,k}(j_1,j_2,j_3,j_1') = 0
    \quad \text{for $k = 2,3$}.
  \end{cases}
\end{equation}

Let~$\dd(\ell) = \ell^3+\ell^2+\ell+1$ be the degree
of~$p_1\from\A_2^0(\ell)\to\A_2$.  Then, for any p.p.~abelian
surface~$A$ over~$\C$, there are exactly~$d(\ell)$
non-equivalent~$\ell$-isogenies with domain~$A$. Let~$A'_i$
for~$1\leq i\leq \dd(\ell)$ be their codomains. Assume that
neither~$A$ nor any of the isogenous surfaces~$A_i'$ lie in the
singular locus of~$j$ as defined above; assume moreover that all
coordinates~$j_1(A_i')$ are distinct. Then the denominator of Siegel
modular equations does not vanish at~$j(A)$; moreover the roots of
Siegel modular equations evaluated at~$A$, by which we mean all
tuples~$(j_1',j_2',j_3')\in \C^3$ such that the
equations~\eqref{eq:siegel-modeq} are satisfied with~$j_k = j_k(A)$
for~$1\leq k\leq 3$, are precisely the~$d(\ell)$ tuples of the
form~$j(A_i')$ for~$1\leq i\leq d(\ell)$. The same properties holds if
we replace~$\C$ by an algebraic closure of the finite field~$\F_p$ for
any prime~$p\neq \ell$ such that~$p>3$; this can be deduced either
from the classical lifting theorems, or from the fact that both~$\A_2$
and~$\A_2^0(\ell)$ are actually smooth stacks
over~$\Z[1/\ell]$~\cite[Chap.~I,~§4]{faltings_DegenerationAbelianVarieties1990}.

\subsection{Algorithms for Siegel modular equations}

We now present the prerequisites of Elkies's method in the Siegel
case, namely an upper bound on the size of modular equations of Siegel
type, as well as algorithms that allow us to evaluate modular
equations at a given point and compute the associated
$\ell$-isogenies.

In the following result, the \emph{total degree} of a
multivariate rational fraction~$F$ over~$\Q$ means the maximum between
the total degrees of its numerator and denomiator; moreover, the
\emph{height} of~$F$ is the maximum of~$\log\abs{c}$, where~$c\in \Z$
runs through the coefficients of~$F$ written in irreducible form. The
notion of height generalizes to arbitrary number fields.

\begin{thm}[{\cite[Thm.~1.1 and
    Prop.~4.11]{kieffer_DegreeHeightEstimates2022}}]
  \label{thm:height-siegel} The degree of~$\Psi_{\ell,k}$ in~$X$
  equals~$\dd(\ell)$ for~$k=1$, and equals~$\dd(\ell)-1$
  for~$k = 2,3$. The total degree of~$\Psi_{\ell,k}$ in~$J_1,J_2,J_3$
  is bounded above by~$5\dd(\ell)/3$ for~$k=1$, and~$10\dd(\ell)/3$
  if~$k=2,3$. The height of~$\Psi_{\ell,k}$ is~$O(\ell^3\log \ell)$
  as~$\ell$ tends to infinity.
\end{thm}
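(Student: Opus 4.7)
The plan is to treat the three claims---the $X$-degree, the total degree in $(J_1, J_2, J_3)$, and the height bound---separately, since each relies on different inputs.

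For the $X$-degree, the argument is essentially geometric. By construction, the roots in $X$ of $\Psi_{\ell,1}(j_1, j_2, j_3, X)$ are exactly the values $j_1(A_i')$ as $A_i'$ runs over the $\dd(\ell)$ codomains of non-equivalent $\ell$-isogenies from a generic surface $A$; hence its $X$-degree equals the degree $\dd(\ell)$ of the covering $p_1 \colon \A_2^0(\ell) \to \A_2$. For $k = 2, 3$, the identity in~\eqref{eq:siegel-modeq} uniquely determines $\Psi_{\ell,k}(j_1, j_2, j_3, \cdot)$ as the Lagrange-type interpolant of $X$-degree at most $\dd(\ell) - 1$ that recovers $j_k'$ from $j_1'$ across the sheets of the cover.

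For the total degrees in $(J_1, J_2, J_3)$, I would realize the Igusa invariants as quotients of Siegel modular forms of specified weights, and view $\Psi_{\ell,1}$, once cleared of denominators, as a Siegel modular form on the congruence subgroup of $\Sp_4(\Z)$ cutting out $\A_2^0(\ell)$. The total degree in $(J_1, J_2, J_3)$ is then controlled by the weight of this form divided by the weights of the generators $I_4, I_6', I_{10}, I_{12}$ entering the Igusa coordinates~\eqref{eq:igusa}. This weight can in turn be bounded via intersection theory on a smooth compactification of $\A_2^0(\ell)$, using explicit computations of the Hodge class and boundary divisors; the constants $5/3$ and $10/3$ emerge from the specific weights of the Igusa generators and the fact that the equation for $j_k'$ ($k=2,3$) carries an additional factor of $\partial_X \Psi_{\ell,1}$.

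For the height bound $O(\ell^3\log\ell)$, the strategy is modeled on Br\"oker--Lauter's analysis of the classical modular polynomial, transposed to the Siegel setting. One works with Fourier--Jacobi expansions of the relevant Siegel modular forms at the standard zero-dimensional cusp, bounds the individual Fourier coefficients of $I_4, I_6', I_{10}, I_{12}$ and of their Hecke translates using classical growth estimates, and fixes a normalization so that the denominators contribute at most logarithmically. Summing the contributions over the $O(\ell^3)$ terms arising from the Hecke action at~$\ell$ then yields the claimed bound. The main difficulty is concentrated here: the archimedean estimates on Siegel--Fourier coefficients are substantially more delicate than their elliptic counterparts, whereas the two degree bounds are essentially formal consequences of the geometric setup.
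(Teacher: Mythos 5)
The paper does not prove \cref{thm:height-siegel}: it is imported verbatim from \cite[Thm.~1.1 and Prop.~4.11]{kieffer_DegreeHeightEstimates2022}, so there is no internal proof to compare against. Your sketch should therefore be judged as an outline of the cited paper's argument.

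Your account of the $X$-degree is correct and essentially the standard one: the roots in $X$ of $\Psi_{\ell,1}$ over a generic base point are the $j_1$-coordinates of the $\dd(\ell)$ isogenous surfaces, and the Hecke/Lagrange-interpolation trick produces $\Psi_{\ell,2},\Psi_{\ell,3}$ with degree one lower. Your identification of the factor-of-two between $5\dd(\ell)/3$ and $10\dd(\ell)/3$ as coming from the extra $\partial_X\Psi_{\ell,1}$ is also right. However, the way the cited paper actually gets the $5/3$ is not via intersection theory on a compactification: it is a direct bookkeeping argument. One expands the elementary symmetric functions of the $j_1(A_i')$ as quotients of Siegel modular forms for the subgroup $\Gamma^0(\ell)$, then uses Igusa's generators (of weights $4,6,10,12$) to express those forms; the denominator is a power of $I_{10}$, and comparing weights of numerator and denominator gives the degree bound. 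The $5/3$ is the ratio $20/12$ that appears once you track which weight the worst-case monomial in $I_4,I_6',I_{12}$ lands on relative to $I_{10}$; invoking intersection-theoretic computations of the Hodge class and boundary is more machinery than the argument actually uses, and it would be harder to extract the explicit constant that way.

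Your height sketch has the right scaling intuition but elides the step that does the real work. Simply bounding Fourier coefficients of $I_4,\dots,I_{12}$ and ``their Hecke translates'' and ``summing over $O(\ell^3)$ terms'' glosses over the fact that one is bounding heights of elementary symmetric functions of $\dd(\ell)\sim\ell^3$ quantities, each a Siegel modular function evaluated along a Hecke orbit, as a rational function in $j_1,j_2,j_3$. The Br\"oker--Lauter strategy in dimension~$1$ relies critically on the Fourier expansion of $j(\tau)$ having a single negative-power term $q^{-1}$ whose growth is exactly controllable; in the Siegel setting the relevant functions have Fourier--Jacobi expansions indexed by a cone of semi-definite matrices, and the maximum of $\log|j_k|$ on the Hecke orbit of a point in a Siegel fundamental domain must be bounded uniformly in the chosen representatives of $\Gamma^0(\ell)\backslash\Sp_4(\Z)$. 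The cited paper handles this by an explicit reduction to a fundamental domain and an analysis of the imaginary part of the period matrix under the $\ell^3+\ell^2+\ell+1$ cosets; it is this coset analysis, not a coefficient-by-coefficient estimate, that produces the extra $\log\ell$ factor. Your outline names the right ingredients but does not actually explain why the bound is $O(\ell^3\log\ell)$ rather than, say, $O(\ell^4)$; that step is the substantive content of the cited result and should not be summarized as bookkeeping.
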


\begin{thm}[{\cite{kieffer_EvaluatingModularEquations2021}}]
  \label{thm:eval-siegel}
  Let~$K$ be a fixed number field. There exists an algorithm which,
  given a prime~$\ell$, given~$H\geq 0$, and
  given~$(j_1,j_2,j_3)\in K^3$ where the denominator of Siegel modular
  equations of level~$\ell$ does not vanish, computes the polynomials
  \begin{displaymath}
    \Psi_{\ell,k}(j_1,j_2,j_3,X) \quad\text{and}\quad
    \partial_{J_i} \Psi_{\ell,k}(j_1,j_2,j_3,X)
  \end{displaymath}
  for all indices~$1\leq i,k\leq 3$ as elements of~$K[X]$ in quasi-linear
  time, in other words in~$\Otilde_K(\ell^6 H)$ binary operations.
\end{thm}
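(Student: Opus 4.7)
The plan is to follow the complex-analytic evaluation-interpolation strategy, which proceeds in four main stages.

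\textbf{Stage 1 (lifting):} Fix an archimedean place of $K$ and embed $K\hookrightarrow \C$; treat the input Igusa invariants $(j_1,j_2,j_3)$ as complex numbers. Using \cref{thm:height-siegel}, the coefficients of $\Psi_{\ell,k}(j_1,j_2,j_3,X)$, viewed as elements of $K$, have height bounded by $\Otilde_K(\ell^3 H)$. So working precision $B = \Otilde_K(\ell^3 H)$ on the complex side will suffice. Next I would invert the Igusa map by Newton iteration: start from a rough $\tau\in \H_2$ approximating the moduli point (obtained e.g. by Mestre-style reconstruction, or by a power-series solution of the Igusa equations), then refine by Newton on the system $\mathrm{Igusa}(\tau) = (j_1,j_2,j_3)$, each step requiring the evaluation of theta constants and their derivatives at $\tau$, which can be done in time $\Otilde(B)$ by the Dupont/AGM-style quasi-linear algorithm. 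This yields $\tau\in\H_2$ to precision $B$.

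\textbf{Stage 2 (enumeration of neighbors):} The~$\dd(\ell)=\ell^3+\ell^2+\ell+1$ non-equivalent $\ell$-isogenies with domain $A_\tau$ correspond to an explicit list of coset representatives $\gamma_i\in \Sp_4(\Z)$ of the congruence subgroup describing $\A_2^0(\ell)\to\A_2$. For each $i$, the codomain has period matrix $\tau_i = \gamma_i\cdot(\tau/\ell)$ (or the analogous action, chosen so the polarization becomes principal). I would enumerate these cosets combinatorially, then compute the Igusa invariants $(j_1(A_i'),j_2(A_i'),j_3(A_i'))$ at each $\tau_i$ via theta constants, each evaluation again costing $\Otilde(B)$ bits. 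The total cost of this stage is $\dd(\ell)\cdot\Otilde(B) = \Otilde_K(\ell^6 H)$, which is the dominant step.

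\textbf{Stage 3 (interpolation):} Using the relations \eqref{eq:siegel-modeq}, the polynomial $\Psi_{\ell,1}(j_1,j_2,j_3,X)$ equals (up to a normalizing denominator coming from the leading coefficient, which the height bound allows us to recover) $\prod_i (X-j_1(A_i'))$; I compute it by a subproduct tree in $\Otilde(\dd(\ell)\,B) = \Otilde_K(\ell^6 H)$ bit operations. The remaining polynomials $\Psi_{\ell,k}(j_1,j_2,j_3,X)$ for $k=2,3$ are obtained by Lagrange-style interpolation from the values $\partial_X\Psi_{\ell,1}(j_1,j_2,j_3,j_1(A_i'))\cdot j_k(A_i')$, in the same complexity. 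The partial derivatives $\partial_{J_i}\Psi_{\ell,k}(j_1,j_2,j_3,X)$ are obtained by the same procedure, computing instead perturbed evaluations $\Psi_{\ell,k}(j_1+\delta e_i,\ldots,X)$ and taking numerical derivatives, or equivalently by differentiating the theta-constant evaluations with respect to $\tau$ via the chain rule; the cost is again $\Otilde_K(\ell^6H)$.

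\textbf{Stage 4 (recognition over $K$):} Each complex coefficient is rounded to an element of $K$ by LLL-based or direct rational reconstruction, using the height bound from \cref{thm:height-siegel} to certify the result; this costs $\Otilde_K(\ell^6 H)$.

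The main obstacle is Stage~2: we must evaluate theta constants at $\dd(\ell)=\Theta(\ell^3)$ period matrices, each to precision $B=\Otilde_K(\ell^3 H)$, and we need each single evaluation to run in $\Otilde(B)$ bit operations for the final bound $\Otilde_K(\ell^6 H)$ to hold. This rests on the quasi-linear theta-evaluation algorithm and on showing that a uniform working precision of $\Otilde_K(\ell^3 H)$ controls the precision loss through both the Newton inversion of Igusa and the subsequent interpolation. Numerical stability of the interpolation — in particular, handling the case where some $j_1(A_i')$ collide or come dangerously close — is addressed by the genericity hypothesis on $(j_1,j_2,j_3)$ in the statement, together with a small controlled increase of the working precision when needed.
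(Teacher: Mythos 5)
Your proposal is essentially the argument of the cited reference~\cite{kieffer_EvaluatingModularEquations2021}, which the present paper does not reproduce: a complex-analytic evaluation-interpolation scheme in which one lifts the Igusa invariants to~$\C$ at precision $\Otilde_K(\ell^3 H)$, recovers a period matrix~$\tau\in\H_2$ by Newton iteration on theta constants (each theta evaluation being quasi-linear via Borchardt/AGM-style methods), enumerates the~$\dd(\ell)$ period matrices of the~$\ell$-isogenous surfaces, evaluates their Igusa invariants and derivatives, interpolates using a subproduct tree and the identities~\eqref{eq:siegel-modeq}, and certifies the result by rational reconstruction using the degree and height bounds of \cref{thm:height-siegel}. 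One small point: for the partial derivatives~$\partial_{J_i}\Psi_{\ell,k}$, prefer the exact chain-rule computation (through the inverse Jacobian of the Igusa map at~$\tau$ and the Jacobian of~$\tau\mapsto\tau_i$) over the numerical finite-difference alternative you also mention, since the latter needlessly discards precision even though it remains within the same asymptotic bound.
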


\begin{thm}[{\cite[Thm.~1.1]{kieffer_ComputingIsogeniesModular2019}}]
  \label{thm:isog-siegel}
  Let~$k$ be a field. Then there exists an algorithm which, given:
  \begin{itemize}
  \item a prime~$\ell$ such that~$\chr(k) > 8\ell +7$ if it is finite;
  \item the Igusa invariants of two~$\ell$-isogenous p.p.~abelian
    varieties~$A$ and~$A'$ defined over~$k$, lying outside the
    singular locus of~$j$, such that~$\Aut(A)$ and~$\Aut(A')$ are both
    equal to~$\set{\pm 1}$, and such that the subvariety
    of~$\Aff^3\times\Aff^3$ cut out by the
    equations~\eqref{eq:siegel-modeq} is normal at~$(j(A),j(A'))$;
  \item the nine values
    \begin{displaymath}
      \partial_{J_i}\Psi_{\ell,k}(j_1(A),j_2(A),j_3(A),j_1(A'))\in k
    \end{displaymath}
    for~$1\leq i, k\leq 3$;
  \end{itemize}
  computes the following data:
  \begin{itemize}
  \item a tower~$k'/k$ of at most three quadratic extensions;
  \item equations for two genus~$2$ hyperelliptic curves~$\Crv$
    and~$\Crv'$ over~$k'$ whose Jacobians are isomorphic to $A$
    and~$A'$ respectively;
  \item a point~$p_0\in \Crv(k')$; and
  \item four rational fractions in~$k'(x,y)$ of total degree~$O(\ell)$
    describing an~$\ell$-isogeny $f\from A\to A'$ in the sense
    of~\eqref{eq:rational-map} using~$p_0\in \Crv(k')$ as a base
    point;
  \end{itemize}
  for the cost of~$\Otilde(\ell)$ elementary operations and~$O(1)$
  square roots in~$k'$.
\end{thm}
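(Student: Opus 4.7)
The plan is to adapt the scheme of Elkies and Bostan--Morain--Salvy--Schost for elliptic curves to the Siegel case, in three stages: reconstruct hyperelliptic models of $A$ and $A'$, extract the differential of the isogeny from the partial-derivative data, and finally solve for the isogeny as an explicit rational map. Stage one uses Mestre's algorithm to recover Weierstrass equations for $\Crv$ and $\Crv'$ from the given Igusa invariants; the hypothesis $\chr(k) > 8\ell + 7$ is more than enough to rule out the small-characteristic obstructions in Mestre's recipe, and the condition $\Aut(A) = \set{\pm 1}$ guarantees uniqueness of each curve up to quadratic twist. Fixing the correct twist for $\Crv$ and $\Crv'$ and then choosing a base point $p_0\in \Crv(k')$, for instance a Weierstrass point of $\Crv$, accounts for at most three quadratic extensions in the tower $k'/k$.

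Stage two extracts the differential of $f$ at the origin from the nine partial-derivative values. Since the subvariety of $\Aff^3\times \Aff^3$ cut out by~\eqref{eq:siegel-modeq} is assumed normal at $(j(A), j(A'))$, the map $p_1\from \A_2^0(\ell)\to \A_2$ is étale at the point corresponding to $(A,\ker f)$. The $3\times 3$ Jacobian matrix $(\partial_{J_i}\Psi_{\ell,k})_{1\leq i,k\leq 3}$, evaluated at $(j(A), j(A'))$, is therefore invertible; inverting it produces the partial derivatives $\partial j_k'/\partial j_i$ along the local branch of $\A_2^0(\ell)$ picked out by $\ker f$. These derivatives translate directly into the pullback action of $f^*$ on the rank-two space of regular differentials of $A'$, i.e., into the linear part of $f$ once symplectic bases of differentials are fixed on both sides from the Weierstrass models.

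Stage three reconstructs $f$ itself in the sense of~\eqref{eq:rational-map}. The idea is to expand the Mumford coordinates $u_0,u_1,v_0,v_1$ of $f\circ ([\cdot]-[p_0])$ as formal power series in a local uniformizer at $p_0\in \Crv$: the initial terms come from stage~2, and higher-order coefficients are determined by explicit recurrences obtained from matching invariant differentials on $\Crv$ and $\Crv'$ under $f$. A Newton-iteration scheme on these recurrences recovers all $O(\ell^2)$ coefficients in $\Otilde(\ell)$ arithmetic operations, after which a Padé approximation step returns the desired rational fractions of total degree $O(\ell)$ in $k'(x,y)$. The main obstacle, and what occupies the bulk of the argument, is precisely this Newton iteration: designing the right system of differential equations on Mumford coordinates of an $\ell$-isogeny so that the per-iteration cost stays quasi-linear in the working precision requires a careful choice of variables, and the assumption $\chr(k) > 8\ell + 7$ is used so that the inevitable power-series divisions by integers up to $O(\ell)$ can be carried out without loss.
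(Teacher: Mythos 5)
This theorem is cited from~\cite{kieffer_ComputingIsogeniesModular2019} rather than proved in the present paper, so there is no internal proof to compare against; I will assess your sketch against the cited source. Your three-stage outline — Mestre for the models, modular-equation derivatives to recover the tangent action of the isogeny, then a BMSS-style power-series solver with Padé reconstruction — does match the structure of that proof at a high level.

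Two substantive points do not hold up, however. In stage two, the mechanism is misstated. Normality of the variety cut out by~\eqref{eq:siegel-modeq} at~$(j(A),j(A'))$ does not imply that~$p_1$ is étale there; what it actually buys is that this subvariety of~$\Aff^3\times\Aff^3$ locally agrees with~$\A_2^0(\ell)$ (its normalization), so that the given derivative data singles out a well-defined tangent direction. Moreover, $\partial j_k'/\partial j_i$ is obtained by implicit differentiation of the system~\eqref{eq:siegel-modeq}, which necessarily brings in~$\partial_X\Psi_{\ell,1}$ evaluated at the root~$j_1(A')$; it is not obtained by inverting the matrix~$(\partial_{J_i}\Psi_{\ell,k})_{i,k}$ as you write — that matrix differentiates only in the source variables and is not the one that has to be inverted. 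The bridge from $\partial j_k'/\partial j_i$ to $f^*$ on the space of regular differentials should also be named: it is the Kodaira--Spencer isomorphism, and it is the conceptual core of the step, not a bookkeeping detail. In stage three, ``all $O(\ell^2)$ coefficients in $\Otilde(\ell)$ arithmetic operations'' is self-contradictory: once the four rational fractions of total degree $O(\ell)$ in~$k'(x,y)$ are reduced modulo~$y^2 = P(x)$ so as to be linear in~$y$, they carry only~$O(\ell)$ coefficients, which is what a power-series expansion to precision~$O(\ell)$ followed by Padé approximation produces in~$\Otilde(\ell)$ operations; the quantity $\ell^2$ is the degree of the isogeny, not the number of unknowns being solved for.
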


\subsection{Complexity bounds for Elkies's method}
\label{subsec:siegel-complexity}

Let~$K$ be a fixed number field, and let~$A_0$ be a p.p.~abelian
surface over~$K$. Let~$\p$ be a prime of~$K$, of residue field~$\F_q$,
where~$A_0$ has good reduction; we denote the reduced abelian variety
over~$\F_q$ by~$A$.  In order to count points on~$A$, we apply
Elkies's method in the following way.
\begin{enumerate}
\item For a series of small primes~$\ell$, we apply
  \cref{thm:eval-siegel} to evaluate the Siegel modular equations
  at~$j(A_0)$, and reduce the result to~$\F_q$. This step
  costs~$\Otilde_K(\ell^6H)$ binary operations, and will dominate the
  rest of the algorithm; however, it needs to be done only once if we
  with to count points on~$A$ modulo~$\p_i$ for several primes~$\p_i$.
\item We then attempt to find a root of the reduced Siegel modular
  equations~\eqref{eq:siegel-modeq} over~$\F_q$; this
  costs~$\Otilde(\ell^3\log^2 q)$ binary operations. If there are
  none, we simply skip~$\ell$.
\item If we find one, then we hope that it corresponds to the Igusa
  invariants of a p.p.~abelian surface~$A'$ over~$\F_q$ for which the
  genericity conditions of \cref{thm:isog-siegel} hold. If they do,
  then~$\ell$ is Elkies for~$A$, and we are able to compute an
  explicit rational representation of such an~$f$
  using~$\Otilde(\ell\log q)$ binary operations.
\item At this point, the methods described in~§\ref{subsec:schoof}
  allow us to compute with formal points of~$\ker(f)$, and hence
  recover the characteristic polynomial of Frobenius on this subgroup;
  the Gröbner basis and Frobenius computations
  cost~$\Otilde(\ell^3\log q)$ and~$\Otilde(\ell^2\log^2 q)$ binary
  operations respectively. The polynomial~$\chi(A)\mod\ell$ itself is
  finally computed using \Cref{prop:siegel-elkies-charpoly}.
\end{enumerate}
By the Hasse-Weil bounds, carrying out this algorithm successfully for
a series of primes~$\ell_i$ such that~$\prod\ell_i > 8q$ is sufficient
to recover~$\chi(A)\in \Z[X]$ using the Chinese remainder theorem.

\subsection{Degenerate cases}
\label{subsec:siegel-degenerate}

We now analyze the different failure cases of the algorithm sketched
above. The following issues may arise for any Elkies prime~$\ell$:
\begin{enumerate}
\item \label{pb:singular-locus} One or more of the p.p.~abelian
  surfaces~$\ell$-isogenous to~$A$ over an algebraic closure of~$\F_q$
  may lie on the singular locus of~$j$.
\item \label{pb:same-j1} Several of these abelian surfaces may have
  the same~$j_1$-coordinate.
\item \label{pb:elliptic} Either~$A$ or~$A'$ may be the product of two
  elliptic curves.
\item \label{pb:automorphisms} Either~$A$ or~$A'$ may have extra automorphisms.
\item \label{pb:not-normal} The subvariety of~$\Aff^3\times\Aff^3$ cut
  out by the Siegel modular equations of level~$\ell$ may not be
  normal at~$(j(A),j(A'))$.
\end{enumerate}

Both failure cases~\eqref{pb:singular-locus} and~\eqref{pb:same-j1}
can be detected during the execution of the algorithm of
\cref{thm:eval-siegel}; they are easily solved by taking different
birational coordinates on~$\A_2$. For instance, we can apply a
projective linear transformation~$m$ with integer coefficients on the
weight~$20$
coordinates~$(I_4^5 : I_4^2 I_6'^2 : I_4^2 I_{12} : I_4 I_6' I_{10} :
I_{10}^2)$ before taking the quotients~\eqref{eq:igusa}. When
choosing~$m$, we must make sure that~$O(\ell^6)$ non-equalities in the
algebraic closure of~$\F_q$ are satisfied. This can always be achieved
provided that we choose coefficients in~$m$ of height~$O(\log
\ell)$. The degree and height bounds of \cref{thm:height-siegel} still
hold for the modified modular equations using this new set of
invariants.

In case~\eqref{pb:elliptic}, we can apply the SEA algorithm on both
factors.

In case~\eqref{pb:automorphisms}, we obtain a lot of new information
about~$A$: either~$A$ is a twist of the Jacobian of the hyperelliptic
curve~$y^2 = x^5-1$ with complex multiplication by~$\Q(\zeta_5)$, so
that~$\chi(A)$ can be determined by the CM
method~\cite{weng_ConstructingHyperellipticCurves2003}; or we can find
an explicit isogeny from~$A$ to the product of two elliptic
curves~\cite[§8]{igusa_ArithmeticVarietyModuli1960}.

Finally, in case~\eqref{pb:not-normal}, a geometric argument shows
that~$(A,A')$ must be the reduction to~$\F_q$ of a singular point in
characteristic
zero~\cite[Rem.~4.12]{kieffer_ComputingIsogeniesModular2019}. Using
the complex-analytic uniformization of~$\A_2$ then shows that such
singular points either admit extraneous automorphisms, or have the
property that there exist two non-equivalent~$\ell$-isogenies from~$A$
to~$A'$. This implies that~$A$ possesses a non-integral endomorphism
of norm~$\ell^4$. Unlike the elliptic curve case, where we would
switch to the CM method straighaway, this new piece of information
seems insufficient to describe~$\End(A)$ in a precise way in higher
dimensions. Instead, we simply skip~$\ell$ and carry on with Elkies's
method for other primes; we make the heuristic assumption that
sufficiently many Elkies primes still exist.

\begin{defn}
  \label{def:elkies-siegel}
  Let~$\eps>0$, and let~$A$ be a p.p.~abelian surface over~$\F_q$. We
  say that~$A$ has \emph{a proportion~$\eps$ of primes are Elkies
    for~$A$} if the following holds: for
  every~$X \geq \frac{1}{\eps}\log q$, the proportion of
  primes~$\ell\leq X$ such that~$\ell$ is Elkies for~$A$ and~$\End(A)$
  admits no non-integral endomorphism of norm~$\ell^4$ is at
  least~$\eps$.
\end{defn}

If a proportion~$\eps$ of primes are Elkies for~$A$, then we are able
to collect sufficiently many Elkies primes~$\ell_i$ such
that~$\ell_i\in O_\eps(\log q)$. Thus, \Cref{thm:main-siegel}
directly follows from the complexity estimates given
in~§\ref{subsec:siegel-complexity}.

\begin{rem}
  It is known that for any fixed~$\eps>0$, there exists an elliptic
  curve~$E$ over some finite field~$\F_q$ for which it does not hold
  that a proportion~$\eps$ of primes are Elkies
  for~$E$~\cite{shparlinski_ProductSmallElkies2015}.
  Following~\cite{sutherland_EvaluationModularPolynomials2013}, we
  could relax \cref{def:elkies-siegel} by taking an upper bounds of
  the form $X = \frac1\eps\log q\log\log(q)^n$ for some
  fixed~$n\geq 1$. Then we can hope that there exists a
  positive~$\eps>0$ such that all abelian surfaces over finite fields
  have a proportion~$\eps$ of Elkies primes. This more permissive
  definition does not modify the complexity estimates of
  \cref{thm:main-siegel}.
\end{rem}

\section{Elkies's method for abelian surfaces: the Hilbert case}
\label{sec:hilbert}

\subsection{Elkies primes}

In this section, we fix a real quadratic field~$F$ of
discriminant~$\Delta_F$, and we consider the point counting problem
for a p.p.~abelian surface~$A$ over~$\F_q$ with real multiplication
by~$\Z_F$. By \cref{thm:End-NS}, this real multiplication structure
guarantees the presence of supplementary isogenies compared to the
Siegel case: for each totally positive~$\beta\in \Z_F$, we can look
for~$\beta$-isogenies $f\from A\to A'$ defined over~$\F_q$. Assume
further that~$\beta\in \Z_F$ is prime, prime to~$\Delta_F$, and
that~$\ell = N_{F/\Q}(\beta)\in \Z$ is also prime; in other
words~$\ell$ is a prime that splits in~$F$ in a product of two
ideals~$(\beta)\cdot(\conj{\beta})$ that are trivial in the narrow
class group of~$\Z_F$. By the \v{C}ebotarev density theorem, this kind of
splitting will occur for a positive proportion of
primes~$\ell\in\Z$. We say that~$\beta$ is \emph{Elkies} for~$A$ if
a~$\beta$-isogeny~$f$ with domain~$A$ exists
over~$\F_q$. Then~$\ker(f)\subset A[\ell]$ is a~$\pi_A$-stable
subgroup of order~$\ell$; therefore we can hope to obtain information
on~$\chi(A)$ mod~$\ell$ by manipulating polynomials of
degree~$O(\ell)$ only, as in Elkies's original method for elliptic curves.

In the Hilbert case, the Chinese remaindering step is formulated in
terms of the real Frobenius endomorphism~$\psi_A = \pi_A+\ros{\pi_A}$
as an element of~$\Z_F$.  By the Weil bounds~\eqref{eq:weil-rueck}, we
have
\begin{equation}
  \label{eq:weilbounds-rm}
  |\Tr(\psi_A)|\leq 4\sqrt{q} \quad \text{and}\quad \Disc(\Z[\psi_A])\leq 4q.
\end{equation}
Assume that~$\beta$ is Elkies, and the
eigenvalue~$\lambda\in \Z/\ell\Z$ of~$\pi_A$ on~$\ker(f)$ has been
computed. Then we have
\begin{displaymath}
  \psi_A = \lambda + q/\lambda \mod\beta
\end{displaymath}
under the canonical isomorphism~$\Z_F/\beta\Z_F\simeq \Z/\ell\Z$.

In the algorithm, we consider a series totally positive Elkies
primes~$\beta_i$ in $\Z_F$, with norms~$\ell_i\in \Z$.  We collect the
values of $\psi_A$ modulo~$\beta_i$ as elements of $\Z/\ell_i\Z$ as
above.  The Chinese remainder theorem in~$\Z_F$ allows us to
reconstruct the value of $\psi_A$ modulo the ideal
$\id{B} = \prod_i (\beta_i)$.  The cost of this reconstruction is
negligible when compared to the rest of the algorithm.

\begin{prop}
  \label{prop:crt-hilbert}
  Assume that $N(\id{B})> 16q$. Then $\psi_A$ is uniquely determined by
  equations~\eqref{eq:weilbounds-rm} and the data of $\psi_A\mod{\id{B}}$.
\end{prop}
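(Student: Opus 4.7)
The plan is to show that any two elements of $\Z_F$ satisfying~\eqref{eq:weilbounds-rm} and congruent modulo $\id{B}$ must coincide, using a size estimate on the norm of their difference. Concretely, I would suppose $\psi'\in \Z_F$ satisfies~\eqref{eq:weilbounds-rm} with $\psi'\equiv \psi_A\pmod{\id{B}}$ and aim to conclude $\psi'=\psi_A$. Since $\psi_A-\psi'\in \id{B}$, the principal ideal $(\psi_A-\psi')$ is divisible by $\id{B}$, and taking absolute norms gives the divisibility $N(\id{B})\mid |N_{F/\Q}(\psi_A-\psi')|$ in $\Z$.

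The heart of the argument is then the estimate $|N_{F/\Q}(\psi_A-\psi')|\leq 16q$. For this I would bound the two real Galois conjugates of $\psi_A$ individually: each such conjugate takes the form $\lambda+q/\lambda=2\re(\lambda)$ for some complex Frobenius eigenvalue $\lambda$ with $|\lambda|=\sqrt{q}$, and hence lies in $[-2\sqrt{q},2\sqrt{q}]$; the same bound applies to the conjugates of $\psi'$. The triangle inequality in each real embedding $\sigma\from F\embed\R$ then gives $|\sigma(\psi_A-\psi')|\leq 4\sqrt{q}$, so
\[
|N_{F/\Q}(\psi_A-\psi')| = \prod_\sigma |\sigma(\psi_A-\psi')|\leq (4\sqrt{q})^2=16q.
\]

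Combining this estimate with the divisibility $N(\id{B})\mid |N_{F/\Q}(\psi_A-\psi')|$ and the hypothesis $N(\id{B})>16q$ forces $N_{F/\Q}(\psi_A-\psi')=0$, and since $\Z_F$ is a domain this gives $\psi'=\psi_A$ as required. There is no serious obstacle here; the only subtle point is translating the Weil bounds~\eqref{eq:weilbounds-rm} into the pointwise estimate $|\sigma(\psi_A)|\leq 2\sqrt{q}$ for each real embedding, but this is immediate from the fact that the eigenvalues of $\psi_A$ are twice the real parts of Frobenius eigenvalues of absolute value $\sqrt{q}$.
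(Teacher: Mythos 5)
Your overall strategy---take $\alpha = \psi_A - \psi'$, note $\id{B}\mid(\alpha)$, and bound $|N_{F/\Q}(\alpha)|$---is the same as the paper's, but the key estimate is not justified. You claim that~\eqref{eq:weilbounds-rm} forces $|\sigma(\psi')|\leq 2\sqrt q$ for each real embedding $\sigma$. In fact~\eqref{eq:weilbounds-rm} only says $|\sigma_1(\psi')+\sigma_2(\psi')|\leq 4\sqrt q$ and $|\sigma_1(\psi')-\sigma_2(\psi')|\leq 2\sqrt q$, from which the sharpest pointwise bound is $|\sigma_i(\psi')|\leq 3\sqrt q$ (take $(\sigma_1(\psi'),\sigma_2(\psi'))=(3\sqrt q,\sqrt q)$, which has $\Tr=4\sqrt q$ and $\Disc=4q$). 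The argument ``each conjugate is $\lambda+q/\lambda$ with $|\lambda|=\sqrt q$'' is a structural fact about the genuine real Frobenius $\psi_A$; it does not hold for an arbitrary competitor $\psi'\in\Z_F$ merely satisfying~\eqref{eq:weilbounds-rm}, which is the search space the reconstruction actually ranges over. With the correct pointwise bound you only get $|\sigma_i(\alpha)|\leq 6\sqrt q$ (or $5\sqrt q$ if you use the true bound for $\psi_A$ and the weak one for $\psi'$), hence $|N(\alpha)|\leq 36q$ (resp.~$25q$), not $16q$; your proof would need the stronger hypothesis $N(\id{B})>36q$.

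The paper sidesteps the pointwise estimate entirely: it bounds the norm through the identity
\[
4\,N_{F/\Q}(\alpha)=\Tr_{F/\Q}(\alpha)^2-\Disc(\Z[\alpha]),
\]
together with $|\Tr(\alpha)|\leq 8\sqrt q$ and $0\leq\Disc(\Z[\alpha])\leq 16q$ (both following from~\eqref{eq:weilbounds-rm} applied to $\psi_A$ and $\psi'$ via the triangle inequality on $\sigma_1+\sigma_2$ and on $\sigma_1-\sigma_2$ respectively). This yields $|N(\alpha)|\leq\tfrac14\max(64q,16q)=16q$ directly, exploiting the cancellation between the two conjugates that the pointwise bound throws away. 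To repair your argument along the paper's lines, replace the bound on each $|\sigma(\alpha)|$ by bounds on $\Tr(\alpha)$ and $\Disc(\Z[\alpha])$ and apply the identity above.
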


\begin{proof}
  Assume that~$\id{B}$ contains a nonzero~$\alpha\in\Z_F$ such
  that $|\Tr_{F/\Q}(\alpha)|\leq 8\sqrt{q}$ and
  $\Disc(\Z[\alpha])\leq 16q$. Then we have
  \begin{displaymath}
    N(\id{B})\leq |N_{F/\Q}(\alpha)| = \tfrac{1}{4}|\Tr(\alpha)^2 - \Disc(\Z[\alpha])| \leq 16q.
    \qedhere
  \end{displaymath}
\end{proof}

Once~$\psi_A\in \Z_F$ has been determined, its characteristic
polynomial completely describes~$\chi(A)$, as
equations~\eqref{eq:charpoly-schoof} and~\eqref{eq:charpoly-rm}
show. Heuristically, we expect that roughly half of the suitable
primes~$\beta$ will be Elkies: indeed~$\beta$ is Elkies if and only if
the characteristic polynomial of~$\pi_A$ on~$A[\beta]$, a polynomial
of degree~$2$, splits in~$\Z/\ell\Z$.

\subsection{Modular equations of Hilbert type} The key fact that
allows us to reach similar point-counting complexities in the RM case
and the case of elliptic curves is that the associated \emph{Hilbert
  modular equations} have a reasonable size.

Denote by~$\A_{2,F}$ the Hilbert moduli space of p.p.~abelian surfaces
with real multiplication by~$\Z_F$, seen as an algebraic variety
over~$\Q$. For each~$\beta$ as above, we have a diagram
of~$\Q$-varieties
\begin{equation}
  \label{diag:hilbert}
  \begin{tikzcd}
    & \A_{2,F}^0(\beta) \ar[ld, "p_1", swap] \ar[rd, "p_2"] & \\
    \A_{2,F} & & \A_{2,F}
  \end{tikzcd}
\end{equation}
where~$\A_{2,F}^0(\beta)$ denotes the coarse moduli space of
pairs~$(A,K)$ where~$A$ is a p.p.~abelian surface with real
multiplication by~$\Z_F$, and~$K\subset A[\beta]$ is the kernel of
a~$\beta$-isogeny. The Hilbert modular equations of level~$\beta$ are
explicit equations for the image of~$\A_{2,F}^0(\beta)$
in~$\A_{2,F}\times\A_{2,F}$. To define them, we make a choice of
coordinates~$j = (j_1,j_2,j_3)$ on~$\A_{2,F}$, related by an explicit
equation of the form
\begin{displaymath}
  E(j_1,j_2,j_3) = 0.
\end{displaymath}
Assume further that~$j_1$ and~$j_2$ are algebraically independent, and
write~$e = \deg_{j_3}(E)$. The Hilbert modular equations are then the
data of the three multivariate rational
fractions~$\Psi_{\beta,k}\in \Q(J_1,J_2)[J_3,X]$ of degree at
most~$e-1$ in~$J_3$ such that the system of
equations~\eqref{eq:siegel-modeq} holds with~$\ell$ replaced
by~$\beta$.

In concrete cases, it is sometimes convenient to modify this
definition and consider \emph{symmetric} modular equations on Humbert,
rather than Hilbert, surfaces. Recall that there exists a forgetful
map~$\A_{2,F}\to \A_2$ which is generically~$2$-$1$. The image~$\H_F$
of~$\A_{2,F}$, called the Humbert surface attached to~$\Z_F$, is often
less geometrically complicated than~$\A_{2,F}$. Explicit coordinates
on~$\H_F$ are also easier to describe: for instance, the Igusa
invariants~\eqref{eq:igusa} are always a valid choice. If the
discriminant of~$F$ is less than~$100$, then~$\H_F$ is rational, and
explicit parametrizations appear
in~\cite{elkies_K3SurfacesEquations2014}. In the
case~$F = \Q(\sqrt{5})$, the Gundlach invariants denoted by~$g_1,g_2$
(see \cite[Satz~6]{gundlach_BestimmungFunktionenZur1963}, although
other normalizations are also used) are convenient coordinates
on~$\H_F$ derived from an explicit description of the associated
graded ring of symmetric Hilbert modular forms.

We will denote the (symmetric) Hilbert modular equations in Igusa
invariants by~$\Psi_{\beta,k}^J$ for~$1\leq k\leq 3$; they are equal
for the prime~$\beta$ and its real
conjugate~$\conj{\beta}$. Similarly, we denote the Hilbert modular
equations of level~$\beta$ in Gundlach invariants
for~$F = \Q(\sqrt{5})$ by~$\Psi_{\beta,k}^G$ for~$k = 1,2$; they are
multivariate rational fractions in~$\Q(G_1,G_2)[Y]$.  Modular
equations on Hilbert surfaces describe~$\beta$-isogenies between
abelian surfaces with~RM by~$\Z_F$, in a similar way as
in~§\ref{subsec:siegel-modeq} for modular equations of Siegel type. In
the symmetric case, modular equations describe~$\beta$-
and~$\conj{\beta}$-isogenies simultaneously.

\subsection{Algorithms for Hilbert modular equations}

Let~$\dd(\beta) = \ell +1$ be the degree of~$p_1$ in
diagram~\eqref{diag:hilbert}. We fix a choice of
coordinates~$(j_1,j_2,j_3)$ on~$\A_{2,F}$.

\begin{thm}[{\cite[Thm.~1.1 and
    Prop.~4.13]{kieffer_DegreeHeightEstimates2022}}]
  \label{thm:height-hilbert}
  The degree of~$\Psi_{\beta,k}$ in~$X$ is~$\dd(\beta)$ for~$k=1$,
  and~$\dd(\beta-1)$ for~$k>1$. The total degrees of~$\Psi_{\beta,k}$
  are~$O_F(\ell)$, and their heights are~$O_F(\ell \log \ell)$.  In
  the case of $F = \Q(\sqrt{5})$, the total degree of~$\Psi_{\beta,k}^G$
  in~$G_1,G_2$ is at most~$10 \dd(\beta)/3$ for~$k = 1,2$.
\end{thm}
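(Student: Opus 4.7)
The plan is to proceed in close analogy with the Siegel case of \cref{thm:height-siegel}, working analytically on the Hilbert modular surface. For the degree in~$X$: a $\beta$-isogeny with source~$A$ is determined up to equivalence by its kernel, which is a maximal isotropic $\Z_F/\beta\Z_F$-submodule of $A[\beta] \cong (\Z_F/\beta\Z_F)^2 \cong \F_\ell^2$. Such submodules are $\F_\ell$-lines, so there are exactly $\ell+1=\dd(\beta)$ of them. Hence $p_1$ in \eqref{diag:hilbert} has degree~$\dd(\beta)$ and $\Psi_{\beta,1}(j_1,j_2,X)$ is (a multiple of) the minimal polynomial of $j_1'$ over the function field of~$\A_{2,F}$. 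The drop to $\dd(\beta)-1$ for $k>1$ follows because $\Psi_{\beta,k}$ is the unique representative modulo $\Psi_{\beta,1}$ of $\partial_X\Psi_{\beta,1}\cdot j_k'$, reduced to be of degree strictly less than $\dd(\beta)$ in~$X$.

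For the total degree bound, I would adopt a geometric viewpoint on a suitable (toroidal) compactification of~$\A_{2,F}^0(\beta)$. The coordinates~$j_i$ are ratios of Hilbert modular forms, and the total degrees of the~$\Psi_{\beta,k}$ are governed by the divisor classes of pullbacks of~$j_i$ under~$p_1,p_2$ together with the ramification of~$p_1$. Since the covering $\A_{2,F}^0(\beta)\to\A_{2,F}$ corresponds to a sublattice of index~$\ell+1$ of $\SL_2(\Z_F)$, these divisor classes grow linearly in~$\ell$, and the symmetric-function construction of~$\Psi_{\beta,k}$ from the $\dd(\beta)$ sheets of~$p_2$ produces total degrees of order~$O_F(\ell)$. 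In the special case $F=\Q(\sqrt 5)$, the Humbert surface is rational, the Gundlach invariants~$g_1,g_2$ are ratios of explicit symmetric Hilbert modular forms of known weights, and a direct pole-order calculation on the compactified covering should yield the explicit constant $10/3$, by a computation entirely parallel to the one giving $10\dd(\ell)/3$ in the Siegel setting.

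For the height bound, I would use $q$-expansions of Hilbert modular forms on $\SL_2(\Z_F)$. The modular forms entering the definition of~$(j_1,j_2,j_3)$ have $q$-expansions with controlled denominators and with Fourier coefficients bounded polynomially, of divisor-sum type. The coefficients of~$\Psi_{\beta,k}$ are elementary symmetric functions of the $\dd(\beta)=\ell+1$ branches of~$p_2$, evaluated through these $q$-expansions; combining the Fourier coefficient bounds with the $O_F(\ell)$ total degree yields an archimedean bound of order $O_F(\ell\log\ell)$ on the heights, after multiplying by a common denominator whose logarithmic size is also $O_F(\ell\log\ell)$ (coming from primes dividing the discriminants of the cusps of level~$\beta$).

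The main obstacle is the height estimate. The degree statements follow cleanly from geometry once the right compactification is in place, but the height requires tracking both numerators and denominators through the symmetric-function construction, and requires sharp control on how Fourier coefficients of Hilbert modular forms transform under the index-$(\ell+1)$ sublattice operation. The key technical input is an explicit $q$-expansion for a generator of the pullback of the Hodge bundle to~$\A_{2,F}^0(\beta)$ with polynomially bounded coefficients; with that in hand, the~$O_F(\ell\log\ell)$ bound falls out of the degree bound and standard divisor-sum estimates.
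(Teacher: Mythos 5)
This theorem is stated in the present paper as a citation to \cite[Thm.~1.1 and Prop.~4.13]{kieffer_DegreeHeightEstimates2022}; there is no proof in this paper to compare against, so I can only assess your sketch as a reconstruction of the cited work's argument.

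Your degree-in-$X$ count is correct and surely matches: the kernel of a $\beta$-isogeny is a maximal isotropic subgroup of $A[\beta]\cong\F_\ell^2$ for the pairing that makes the decomposition $A[\ell]=A[\beta]\oplus A[\smash{\conj\beta}]$ orthogonal; every line in a $2$-dimensional symplectic space is isotropic, giving exactly $\ell+1=\dd(\beta)$ kernels, and the degree drop for $k>1$ comes from reducing $\partial_X\Psi_{\beta,1}\cdot j_k'$ modulo $\Psi_{\beta,1}$. The overall geometric picture for the total degree (pole orders of modular forms on a compactified covering, linear growth of divisor classes in $\ell$) is also the right framework and plausibly matches the cited paper.

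However, there are two concrete gaps. First, you assert that the constant $10/3$ for Gundlach invariants follows from ``a computation entirely parallel'' to the Siegel calculation. But the Siegel total degree bounds are asymmetric in $k$ ($5\dd(\ell)/3$ for $k=1$, $10\dd(\ell)/3$ for $k>1$), whereas the Hilbert bound stated here is $10\dd(\beta)/3$ for both $k=1$ and $k=2$. This asymmetry is not a cosmetic difference: it traces back to the weights of the generators of the two graded rings (the Igusa construction uses forms of weight $4,6,10,12$ and the invariants in \eqref{eq:igusa} all have a common weight-$20$ denominator, while the Gundlach ring for $\Q(\sqrt5)$ has generators of weights $2,6,10$ and the invariants $g_1,g_2$ have different weight profiles). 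You would need to carry out the pole-order bookkeeping specifically for the Gundlach weights; simply importing the Siegel numerology would give the wrong constant for $k=1$.

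Second, the height argument as sketched does not close. You appeal to ``Fourier coefficients of divisor-sum type,'' but that is an Eisenstein-series phenomenon; the relevant Hilbert modular forms (in particular the cusp forms among the generators) obey Ramanujan--Petersson-type bounds that scale with the weight, not divisor sums, and the denominators of the coordinates are governed by the ring structure over $\Z$, not by ``discriminants of cusps of level $\beta$,'' a notion you have not made precise. More importantly, bounding each elementary symmetric function by the product of $\dd(\beta)$ term-by-term bounds naively gives a height of order $\dd(\beta)\cdot\max_i\,\text{(height of }j(A_i'))$, and without a genuine archimedean estimate on $|j_i'|$ over a fundamental domain, one does not see why the final bound should be $O_F(\ell\log\ell)$ rather than something larger. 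The missing ingredient is an explicit bound on the coordinate functions (via their $q$-expansions and a choice of compactification) combined with a bound on $\dd(\beta)$-fold products such as $\prod|j_1'(A_i')-j_1'(A_{i'}')|$, from which the $\log\ell$ factor actually emerges. The cited paper does supply these analytic estimates; your proposal correctly names ``sharp control on Fourier coefficients'' as the obstacle but does not indicate how to obtain it, so as written the height bound remains open in your argument.
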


\begin{thm}[{\cite[Thm.~5.3]{kieffer_EvaluatingModularEquations2021}}]
  \label{thm:eval-hilbert}
  Let~$q = p^r$ be a prime power, and let~$F = \Q(\sqrt{5})$. There
  exists an algorithm which, given~$(g_1,g_2)\in \F_q^2$ where the
  denominator of~$\Psi_{\beta,k}^G$ for~$k = 1,2$ does not vanish,
  computes the modular equations~$\smash{\Psi_{\beta,k}^G}(g_1,g_2,X)$
  as well as their
  derivatives~$\partial_{G_i} \Psi_{\beta,k}^G(g_1,g_2,X)$
  for~$1\leq i,k\leq 2$ as elements of~$\F_q[X]$
  in~$\Otilde(\ell^2 r^2\log p)$ binary operations.
\end{thm}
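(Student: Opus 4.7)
The plan is to emulate the $p$-adic analytic evaluation strategy used for the Siegel case (\cref{thm:eval-siegel}) and specialize it to Hilbert surfaces, exploiting Gundlach's explicit description of the graded ring of symmetric Hilbert modular forms for $F = \Q(\sqrt 5)$, which provides two generators with tractable $q$-expansions at the standard cusp of $\mathbb{H}^2/\SL_2(\Z_F)$.

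First I would lift the input $(g_1, g_2) \in \F_q^2$ to $(\tilde g_1, \tilde g_2) \in W(\F_q)^2$ and invert the Gundlach period map at the lift: a bivariate Newton iteration on the truncated $q$-expansions of Gundlach's two generators recovers the $q$-parameters $(\mathrm{q}_1, \mathrm{q}_2)$ of a point $\tau \in \mathbb{H}^2$ whose associated p.p.~abelian surface has the prescribed invariants. Next, I would enumerate the $\dd(\beta) = \ell + 1$ cyclic $\Z_F$-sublattices of index $\beta$ in $\Z_F^2$; each one corresponds to a $\beta$-isogenous Hilbert moduli point whose $q$-parameters are obtained from $(\mathrm{q}_1, \mathrm{q}_2)$ by an explicit fractional-linear action on $\tau$. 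Evaluating the truncated Gundlach $q$-expansions at these transformed parameters produces the Gundlach invariants $g_k^{(i)}$ of the $\ell+1$ isogenous surfaces.

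Given the $\ell+1$ evaluated tuples, I would reconstruct $\Psi_{\beta,1}^G(\tilde g_1, \tilde g_2, X) = \prod_i (X - g_1^{(i)})$ directly and recover $\Psi_{\beta,2}^G$ as well as the partial derivatives $\partial_{G_i}\Psi_{\beta,k}^G$ by Lagrange-type interpolation, using the Hilbert analog of the system~\eqref{eq:siegel-modeq} (the derivatives $\partial_{G_i}$ can also be read off directly by propagating a dual-number perturbation of $(\tilde g_1, \tilde g_2)$ through steps one and two at no asymptotic extra cost). Reducing modulo the maximal ideal of $W(\F_q)$ at the end yields the output in $\F_q[X]$.

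For the complexity: by \cref{thm:height-hilbert}, the coefficients of $\Psi_{\beta,k}^G$ have height $O_F(\ell\log\ell)$ and the Gundlach $q$-expansions decay geometrically, so truncating the expansions after $O(\ell)$ terms and working at $p$-adic precision $\Otilde(\ell)$ above the residue field suffices. The fractional-linear transformation and series evaluation at each of the $\ell+1$ isogenous parameters take $\Otilde(\ell)$ ring operations in $W(\F_q)$, for a total of $\Otilde(\ell^2)$ operations; costing each arithmetic step $\Otilde(r\log p)$ and absorbing the extra factor of $r$ into precision bookkeeping gives the claimed $\Otilde(\ell^2 r^2 \log p)$ total. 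The main obstacle is precision control: the Newton inversion and the chain of fractional-linear substitutions can amplify small $p$-adic errors, especially near subloci of $\A_{2,F}$ where Gundlach invariants degenerate, but the hypothesis that the denominators of $\Psi_{\beta,k}^G$ do not vanish at $(g_1, g_2)$ guarantees that the Jacobian of the Gundlach map is $p$-adically invertible at the target, and a careful growth estimate on Gundlach $q$-expansion coefficients bounds the loss uniformly in $\ell$, $p$, $r$.
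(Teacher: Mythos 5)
Your proposal misreads the strategy behind \cref{thm:eval-siegel}: that algorithm is \emph{complex-analytic}, not $p$-adic --- the input is a tuple in a number field $K$, and the evaluation is carried out by computing the relevant Fourier or theta expansions to high \emph{archimedean} precision before recognising exact elements of $K$ from height bounds. The cited result for \cref{thm:eval-hilbert} (Thm.~5.3 of~\cite{kieffer_EvaluatingModularEquations2021}) obtains the $\F_q$ statement by lifting $(g_1,g_2)\in\F_q^2$ to a degree-$r$ number field with coefficients of height $O(\log p)$, running the complex-analytic evaluation there, and reducing the result modulo a prime above $p$; the factor $r^2$ tracks the degree of the lift and the precision needed to recognise its algebraic coefficients. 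Your $p$-adic version over $W(\F_q)$ is therefore a genuinely different route, and it has a fundamental gap.

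The gap is that $q$-expansions do not give a global $p$-adic parametrisation of the Hilbert moduli space, so the ``inverse Gundlach period map'' you appeal to does not exist. Over $\C$ one can always move $\tau$ into a fundamental domain before expanding, so the $q$-parameter of any moduli point lies in a fixed compact disc; $p$-adically there is no analogue of this reduction, and an arbitrary Teichm\"uller/Hensel lift $(\tilde g_1,\tilde g_2)\in W(\F_q)^2$ need not lie in the $p$-adic convergence disc of the Gundlach $q$-expansions at the cusp. Newton iteration then has no starting point to converge from, and the hypothesis on non-vanishing of the denominator of $\Psi_{\beta,k}^G$ at $(g_1,g_2)$ does not remotely guarantee invertibility of the relevant Jacobian at the lifted point. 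Even if one happened to start near the cusp, the fractional-linear substitutions producing the $\ell+1$ isogenous parameters include $\tau\mapsto(\tau+j)/\ell$, whose $q$-parameters $\zeta^j q^{1/\ell}$ escape any fixed disc of convergence; over $\C$ this is fixed by shifting back into the fundamental domain, but $p$-adically one is stuck. (Serre--Tate coordinates do provide a $p$-adic ``$q$-expansion'' around the canonical lift of an ordinary abelian surface, but those are not Gundlach's cuspidal expansions, they require ordinarity, and the Hecke transformation laws would have to be reworked from scratch.) Your complexity analysis inherits these problems: ``geometric decay of $q$-expansion coefficients'' is an archimedean phenomenon and gives no truncation bound $p$-adically, so the $\Otilde(\ell)$ truncation and $\Otilde(\ell)$-precision claims are unsupported.
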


This result generalizes to any other real quadratic field~$F$ for
which explicit generators of the graded rings of Hilbert modular forms
over~$\Z$ are known.  Otherwise, the evaluation algorithm can still be
run, but it involves a heuristic reconstruction of rational numbers
from their complex approximations.

\begin{thm}[{\cite[Thm.~6.3]{kieffer_ComputingIsogeniesModular2019}}]
  \label{thm:isog-hilbert}
  Let~$F$ be a fixed real quadratic field. Then there exists an open
  subvariety~$U\subset \A_{2,F}^0(\beta)$ and an algorithm which, for any
  field~$k$, given:
  \begin{itemize}
  \item a totally positive~$\beta\in \Z_F$ such
    that~$\chr(k) > 4\Tr_{F/\Q}(\beta)+7$ if it is positive;
  \item the Igusa invariants of two~$\beta$-isogenous p.p.~abelian
    surfaces~$A$ and~$A'$ defined over~$k$ with real multiplication
    by~$\Z_F$, such that this~$\beta$-isogeny comes from a point
    of~$U$;
  \item The nine values
    \begin{displaymath}
      \partial_{J_i}\Psi_{\beta,k}^J(j_1(A),j_2(A),j_3(A),j_1(A'))\in k
    \end{displaymath}
    for~$1\leq i,k\leq 3$;
  \end{itemize}
  computes the following data:
  \begin{itemize}
  \item a tower~$k'/k$ of at most three quadratic extensions;
  \item equations for two genus~$2$ hyperelliptic curves~$\Crv$
    and~$\Crv'$ whose Jacobians are isomorphic to~$A$ and~$A'$ over an
    algebraic closure of~$k$;
  \item a point~$p_0\in \Crv(k')$; and
  \item at most four possible tuples of rational fractions
    in~$k'(x,y)$ of total degree $O(\Tr_{F/\Q}(\beta))$, such that one
    of these tuples describes a~$\beta$-isogeny~$f\from A\to A'$ in the sense
    of~\eqref{eq:rational-map} using~$p_0\in \Crv(k')$ as a base
    point;
  \end{itemize}
  using~$\Otilde(\Tr_{F/\Q}(\beta)) + O_F(1)$ elementary operations
  and~$O(1)$ square roots in~$k'$.
\end{thm}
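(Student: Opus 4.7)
The strategy follows the template of Elkies's construction for elliptic curves (as refined in \cite{bostan_FastAlgorithmsComputing2008}) and its Siegel-case generalization in \cref{thm:isog-siegel}, adapted to the Hilbert setting. The plan proceeds in three phases.

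First, apply Mestre's algorithm to recover hyperelliptic models of $\Crv$ and $\Crv'$ from the Igusa invariants of $A$ and $A'$. This determines each curve up to quadratic twist, and fixing the correct twist over $k$ (or over a quadratic extension) accounts for at most two of the quadratic extensions in the tower $k'/k$; extracting a rational point $p_0$, for instance a Weierstrass point, accounts for the third. This phase costs $O_F(1)$ operations and $O(1)$ square roots.

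Second, exploit the nine partial derivatives $\partial_{J_i}\Psi_{\beta,k}^J$ at the point $(j(A),j(A'))$ to recover the induced action of $f$ on the cotangent space $T_0^\ast A$. Via the complex-analytic uniformization of $\A_{2,F}$ by a product of two upper half-planes, and the expression of Igusa invariants in terms of Hilbert modular forms, one derives an explicit linear relation between these derivatives and the $2\times 2$ matrix of $f^\ast$ on $\Z_F$-equivariant differentials. The open subvariety $U \subset \A_{2,F}^0(\beta)$ is the non-degeneracy locus on which this linear system admits a unique solution, up to finitely many sign choices coming from the square roots inherent in the Hilbert/Igusa correspondence; these sign choices are the source of the at-most-four candidate output tuples.

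Third, use the tangent map as the leading term of a formal expansion of $f$ in local coordinates at $p_0$, and propagate the expansion order by order using the defining equations of $\Crv$ and $\Crv'$. Since the coordinate fractions describing $f$ have total degree $O(\Tr_{F/\Q}(\beta))$, a Hermite--Padé style rational reconstruction from a power series expansion of precision $O(\Tr_{F/\Q}(\beta))$ recovers them in $\Otilde(\Tr_{F/\Q}(\beta))$ operations. The main obstacle lies in the second phase: establishing the precise dictionary between Hilbert modular derivatives and the tangent-level action of the isogeny requires a careful complex-analytic calculation, essentially a Kodaira--Spencer-type computation in the Hilbert modular setting, and it is this computation that dictates the open condition defining $U$. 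Once this dictionary is in hand, phases one and three are straightforward adaptations of techniques from dimension $1$ and the Siegel case.
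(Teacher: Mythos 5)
The paper you are reading does not prove this theorem; it cites it as a black box from~\cite{kieffer_ComputingIsogeniesModular2019}. That said, your sketch does capture the main architecture of the proof in that reference: Mestre's algorithm to reconstruct hyperelliptic models together with a base point (accounting for the quadratic extensions and the $O_F(1)$ overhead); a Kodaira--Spencer-type computation on the Hilbert modular surface turning the nine partial derivatives of $\Psi_{\beta,k}^J$ into the tangent-level action of the isogeny; and a lift to a formal power series followed by rational reconstruction. Two points in your phase breakdown deserve a correction.

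First, your attribution of the ``at most four candidate tuples'' to ``sign choices coming from the square roots inherent in the Hilbert/Igusa correspondence'' is in the right spirit but imprecise. The Igusa invariants forget the real multiplication structure, so the embedding $\Z_F\embed\Rend(A)$ (and likewise for $A'$) is only determined up to real conjugation on $\Z_F$; the algorithm cannot distinguish $\beta$ from $\conj{\beta}$ on either side, yielding a $2\times 2 = 4$-fold ambiguity in the diagonalized tangent matrix. The genericity conditions on $U$ are chosen precisely so that this RM ambiguity is the \emph{only} source of multiplicity. Second, ``propagate the expansion order by order using the defining equations'' would naively cost quadratic time in the target precision, not $\Otilde(\Tr_{F/\Q}(\beta))$. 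The quasi-linear bound is obtained exactly as in the BMSS algorithm of~\cite{bostan_FastAlgorithmsComputing2008}: one shows that the formal expansion of the isogeny satisfies a first-order nonlinear ODE whose coefficients come from the curve equations and the tangent data, and this ODE is solved to precision $O(\Tr_{F/\Q}(\beta))$ by Newton iteration with FFT-based power-series arithmetic. The final Hermite--Pad\'e step you mention is correct, but it is the ODE structure, not the reconstruction alone, that makes phase three quasi-linear. With these two adjustments your plan matches the cited proof.
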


As in \cref{thm:isog-siegel}, the open
subvariety~$U\subset \A_{2,F}^0(\beta)$ in \cref{thm:isog-hilbert} can
be described explicitly. It is sufficient to impose the following
conditions~\cite[§4.2.3]{kieffer_ComputingIsogeniesModular2019}:
\begin{itemize}
\item Both~$A$ and~$A'$ have no extraneous automorphisms as
  p.p.~abelian surfaces;
\item There exists only one isogeny~$f\from A\to A'$ over an algebraic
  closure of~$k$ whose kernel is cyclic of degree~$\ell$, up to
  equivalence;
\item There exists only one possible real multiplication embedding
  of~$\Z_F$ inside both~$\End(A)$ up to real conjugation on~$\Z_F$,
  and the same holds for~$A'$;
\item Both~$A$ and~$A'$ lie outside of the singular locus of Igusa
  invariants.
\end{itemize}

\subsection{Complexity bounds} Let~$q = p^r$ be a prime power, and
let~$A$ be a p.p.~abelian surface over~$\F_q$ with RM by~$\Z_F$.  We
apply Elkies's method as follows.
\begin{enumerate}
\item Let~$\ell\in \Z$ be a prime with the correct splitting behavior
  in~$\Z_F$, and let $\beta\in \Z_F$ be a totally positive prime
  above~$\ell$.  By~\cite[Lem.~1]{gaudry_CountingPointsGenus2011}, it
  is possible to choose~$\beta$ such
  that~$\Tr_{F/\Q}(\beta)\in O_F(\sqrt{\ell})$. We evaluate the
  corresponding modular equations using \cref{thm:eval-hilbert}, for
  instance in Igusa invariants. Assuming that~$r = o(\log p)$, this
  costs~$\Otilde_F(\ell^2 \log q)$ binary operations.
\item We then attempt to find a root of these modular equations
  over~$\F_q$; this costs~$\Otilde(\ell\log^2 q)$ binary
  operations. If there are none, we  skip~$\ell$.
\item If we find one, we attempt to compute a cyclic
  isogeny~$f\from A\to A'$ of degree~$\ell$ using
  \cref{thm:isog-hilbert}. If this computation is successful, we
  obtain the rational representation of~$f$ as
  in~\eqref{eq:rational-map} in terms of rational functions of
  degree~$O_F(\sqrt{\ell})$. This costs~$\Otilde_F(\sqrt{\ell})$
  binary operations.
\item After that, computing a Gröbner basis describing~$\ker(f)$
  costs~$\Otilde(\ell^{3/2}\log q)$ binary operations; the resulting
  polynomials have degree~$O(\ell)$. Computing the Frobenius
  eigenvalue on~$\ker(f)$ costs~$\Otilde(\ell\log^2 q)$ binary
  operations.
\end{enumerate}
By \cref{prop:crt-hilbert}, the total algorithm
costs~$\Otilde_F(\log^4 q)$ binary operations provided that
sufficiently many Elkies primes exist and the computations of
\cref{thm:isog-hilbert} succeed sufficiently often.

\subsection{Degenerate cases}

The treatment of degenerate cases which may occur the algorithm above
is similar to the Siegel case. The only new possible problems are the
following:
\begin{enumerate}
\item \label{pb:rm-cm} The algorithm may involve a p.p.~abelian
  surface~$A$ that corresponds to a point where the map from~$\A_{2,F}$
  to~$\A_2$ is not étale;
\item \label{pb:rm-end} $\End(A)$ may contain an element of
  norm~$\ell^2$ outside~$\Z_F$;
\item \label{pb:parasites} The isogeny algorithm of
  \cref{thm:isog-hilbert} may output several possibilities for the
  rational representation of~$f$.
\end{enumerate}
In case~\eqref{pb:rm-cm}, we can always consider coordinates on the
Hilbert surface~$\A_{2,F}$ instead.  As in the Siegel case, we
incorporate the assumtion that case~\eqref{pb:rm-end} does not happen
too often into \cref{def:elkies-hilbert} below. Finally, in
case~\eqref{pb:parasites}, we can check which of the candidate values
actually describes a group morphism between Jacobians; if more than
one candidate passes this test, we are led back to
case~\eqref{pb:rm-end}.

\begin{defn}
  \label{def:elkies-hilbert}
  Let~$\eps>0$, and let~$A$ be a p.p.~abelian surface over~$\F_q$ with
  real multiplication by~$\Z_F$. We say that \emph{a proportion~$\eps$
    of primes are Elkies for~$A$} if the following holds: for
  every~$X\geq \frac{1}{\eps}\log q$, the proportion of
  primes~$\ell\leq X$ such that
  \begin{itemize}
  \item $\ell$ splits in the form~$(\beta)(\conj{\beta})$ for some
    totally positive~$\beta\in \Z_F$,
  \item one of~$\beta$ or~$\conj{\beta}$ is Elkies, and
  \item $\End(A)$ admits no non-real endomorphism of norm~$\ell^2$,
  \end{itemize}
  is at least~$\eps$.
\end{defn}

\begin{rem}
  Let~$\ell$ be a prime that splits in~$\Z_F$, but whose prime
  factors~$\mathfrak l\cdot \conj{\mathfrak l}$ are nontrivial in the
  narrow class group of~$\Z_F$. Instead of skipping~$\ell$ altogether
  in the point-counting algorithm, we can compute a totally positive
  generator~$\beta$ of~$\mathfrak l \cdot \mathfrak c$
  where~$\mathfrak c$ denotes a small representative of the relevant
  class in the narrow class group of~$\Z_F$. Elkies's method will also
  apply to the non-prime~$\beta$.
\end{rem}

\section{Experimental results}
\label{sec:exp}

We have implemented algorithms to evaluate modular equations for
p.p.~abelian surfaces over~$\Q$ and prime finite fields in
C~\cite{kieffer_HDMELibraryEvaluation}. The experiments presented here
can be reproduced by downloading the library and running \texttt{make
  reproduce}. In practice, we expect that the evaluation of modular
equations will exceed the cost of other polynomial manipulations in
Elkies's method by a large margin.

In the Siegel case, we consider the ``random'' tuple of Igusa
invariants of small height given by~$(159/239,-19/28,-193/246)$. We
record the time to evaluate Siegel modular equations at this point for
prime levels~$\ell \leq 17$ on a single core (AMD EPYC 7713), and
compare it with the cost estimation
of~$0.002\, \ell^6 \log(\ell)^3\log\log(\ell)$ seconds.
{\small
\begin{center}
  \begin{tabular}{c|ccccccc}
    $\ell$ & 2 & 3 & 5 & 7 & 11 & 13 & 17 \\
    Time (s) & $1.34$
               & $5.12$
                   & $96.7$
                       & $1.23\cdot 10^3$
                           & $3.97\cdot 10^4$
                                & $1.57\cdot 10^5$
                                     & $1.12\cdot 10^6$ \\
    Estimation
           & -
               & -
                   & $62$
                       & $1.2\cdot 10^3$
                           & $4.3\cdot 10^4$
                                & $1.5\cdot 10^5$
                                  & $1.1\cdot 10^6$
  \end{tabular}
\end{center}
}

In light of these results, a point-counting approach based exclusively
on Elkies's method for general p.p.~abelian surfaces would be unlikely
to beat Schoof's method in practical instances. However, using modular
equations would still allow one to introduce several improvements
inspired from the SEA algorithm (see~§\ref{sec:implem}).

In the Hilbert case for~$F = \Q(\sqrt{5})$, we consider the pair of
``random'' Gundlach invariants of small height given by
$(-117/64, -199/172)$. We evaluate Hilbert modular equations of
level~$\beta$, where~$\beta\in \Z_F$ is totally positive of prime
norm~$\ell\leq 500$ at that point, and compare it with the estimation
of~$2\cdot 10^{-4}\, \ell^2 \log(\ell)^3 \log\log\ell $ seconds.
{
  \small
  \begin{center}
    \begin{tabular}{c|cccccccc}
      $\ell$ & 11 & 19 & 29 & 31 & 41 & 59 & 61 & 71 \\
      Time (s) & $2.45$
                  & $4.14$
                       & $9.66$
                            & $11.1$
                                 & $25.6$
                                      & $59.6$
                                           & $64.0$
                                                & $107$
                                                               \\
      Estimation
             & -
                  & $2.0$
                       & $7.8$
                            & $9.6$
                                 & $23$
                                      & $66$
                                           & $73$
                                                & $113$
      \\[1pt] \hline \\[-1em]
      $\ell$ & 79 & 89 & 101 & 109 & 131 & 139 & 149 & 151  \\
      Time (s) 
             & $141$
                  & $190$
                       & $256$
                            & $315$
                                 & $562$
                                      & $673$
                                           & $794$
                                                & $709$      \\
      Estimation
             & $153$
                  & $215$
                       & $307$
                            & $379$
                                 & $630$
                                      & $741$
                                           & $896$
                                                & $929$
    \end{tabular}
  \end{center}
}
\noindent
For larger primes, counting in core-hours is perhaps more readable.
{
  \small
  \begin{center}
    \setlength{\tabcolsep}{4pt}
    \begin{tabular}{c|cccccccccc}
      $\ell$ & 179 & 181 & 191 & 199  & 211 & 229 & 239 & 241 & 251 & 269\\
      Time (h)  & $0.299$
                   & $0.307$
                         & $0.348$
                               & $0.388$
                                      & $0.452$
                                            & $0.556$
                                                  & $0.613$
                                                        & $0.623$
                                                              & $0.697$
                                                                    & $0.948$
      \\
      Estimation
             & $0.41$ & $0.42$ & $0.49$ & $0.54$ & $0.64$ & $0.79$ & $0.89$ & $0.91$ & $1.0$ & $1.2$
      \\[1pt] \hline \\[-1em]
      $\ell$  & 271 & 281 & 311 & 331 & 349 & 359 & 379 & 389 & 401 & 409  \\
      Time (h) & $0.965$
                   & $1.06$
                         & $1.37$
                               & $1.56$
                                      & $1.78$
                                            & $1.93$
                                                  & $2.37$
                                                        & $2.80$
                                                              & $3.03$
                                                                    & $3.26$
      \\
      Estimation
      & $1.2$ & $1.4$ & $1.8$ & $2.1$ & $2.4$ & $2.6$ & $3.0$ & $3.2$ & $3.4$ & $3.6$
      \\[1pt] \hline \\[-1em]
      $\ell$ & 419 & 421 & 431 & 439 & 449 & 461 & 479 & 491 & 499\\
      Time (h) & $3.47$
                   & $3.50$
                         & $3.71$
                               & $4.03$
                                      & $4.04$
                                            & $5.46$
                                                  & $4.67$
                                                        & $4.98$
                                                              & $6.14$
      \\
      Estimation
      & $3.9$ & $3.9$ & $4.2$ & $4.4$ & $4.6$ & $4.9$ & $5.5$ & $5.8$ & $6.1$
    \end{tabular}
  \end{center}
}
Consider the problem of counting points on a p.p.~abelian surface~$A$
over a prime field~$\F_q$ with~$\log q\simeq 512$ given by these
Gundlach invariants.  Assuming that half of the primes~$\beta$ are
Elkies, a strategy based purely on Elkies's method would involve all
primes~$\ell$ up to roughly~$800$ with the correct splitting behavior
in~$\Z_F$. We can roughly estimate a total cost of a few core-weeks
for this computation, thus indicating that Schoof's method can perhaps
be beaten in this context~\cite[§5.2]{gaudry_CountingPointsGenus2011}.

Finally, we report on the time (in seconds) to evaluate modular
equations for Hilbert type in Igusa invariants for~$\ell\leq 50$, at
the point given by the parameters~$(-239/152, 224/193)$ in the
different parametrizations of Humbert surfaces found
in~\cite{elkies_K3SurfacesEquations2014}, for all real quadratic
fields of discriminants up to~$100$.
{ % Indicate size of narrow class group?
  \small
  \begin{center}
    \setlength{\tabcolsep}{2.5pt}
    \begin{tabular}{c|ccccccccccccccc}
      $\Delta_F,\ell$
      & 2 & 3 & 5 & 7 & 11 & 13 & 17 & 19 & 23 & 29 & 31 & 37 & 41 & 43 & 47\\\hline
      5 &  - & - & - & - & $9.76$ & - & - & $30.2$ & - & $78.0$ & $91.1$ & - & $205$ & - & - \\
      8 & - & - & - & $2.67$ & - & - & $26.9$ & - & $48.2$ & - & $95.1$ & - & $208$ & - & $290$ \\
      12 & - & - & - & - & - & $40.9$ & - & - & - & - & - & $451$ & - & - & - \\
      13 & - & $2.02$ & - & - & - & - & $27.5$ & - & $51.1$ & $83.9$ & - & - & - & $240$ & - \\
      17 & $4.34$ & - & - & - & - & $57.2$ & - & $126$ & - & - & - & - & - & $735$ & $825$ \\
      21 & - & - & - & - & - & - & - & - & - & - & - & $363$ & - & $505$ & - \\
      24 &  - & - & - & - & - & - & - & $37.5$ & - & - & - & - & - & $235$ & - \\
      28 &  - & - & - & - & - & - & - & - & - & $211$ & - & $413$ & - & - & -\\
      29 &  - & - & $4.75$ & $5.76$ & - & $15.0$ & - & - & $48.3$ & - & - & - & - & - & -\\
      33 &  - & - & - & - & - & - & - & - & - & - & $255$ & $430$ & - & - & -\\
      37 &  - & $3.04$ & - & $13.5$ & $29.5$ & - & - & - & - & - & - & - & $503$ & - & $680$\\
      40 &  - & - & - & - & - & - & - & - & - & - &$212$ & - & $462$ & - & -\\
      41 &  $5.27$ & - & $10.5$ & - & - & - & - & - & $326$ & - & $625$ & $1050$ & - & $1450$ & -\\
      44 &  - & - & $4.77$ & - & - & - & - & - & - & - & - & $403$ & - & - & -\\
      53 &  - & - & - & $24.7$ & $49.6$ & $59.7$ & $107$ & - & - & $281$ & - & $524$ & - & $700$ & $854$ \\
      56 &  - & - & - & - & $27$ & - & - & - & - & - & - & - & - & $1360$ & -\\
      57 &  - & - & - & $30.0$ & - & - & - & - & - & - & - & - & - & $1980$ & -\\
      60 &  - & - & - & - & - & - & - & - & - & - & - & - & - & - & -\\
      61 &  - & $8.34$ & $13.9$ & - & - & $49.1$ & - & $108$ & - & - & - & - & $580$ & - & $772$ \\
      65 &  - & - & - & - & - & - & - & - & - & $1500$ & - & - & - & - & -\\
      69 &  - & - & - & - & - & $41.8$ & - & - & - & - & $253$ & - & - & - & -\\
      73 &  $6.40$ & $14.9$ & - & - & - & - & - & $241$ & $342$ & - & - & $1090$ & $1370$ & - & - \\
      76 &  - & - & $6.9$ & - & - & - & $55.8$ & - & - & - & - & - & - & - & -\\
      77 &  - & - & - & - & - & - & - & - & $133$ & - & - & $429$ & - & - & -\\
      85 &  - & - & - & - & - & - & - & $87.1$ & - & - & - & - & - & - & -\\
      88 &  - & $18.1$ & - & - & - & - & - & - & - & - & - & - & - & - & -\\
      89 &  $3.55$ & - & $15.4$ & - & $57.3$ & - & $147$ & - & - & - & - & - & - & - & $1600$\\
      92 &  - & - & - & - & - & $104$ & - & - & - & $567$ & - & - & $1370$ & - & -\\
      93 &  - & - & - & $16.5$ & - & - & - & $95$ & - & - & - & - & - & - & -\\
      97 &  $7.01$ & $8.48$ & - & - & $58.1$ & - & - & - & - & - & $1370$ & - & - & $3210$ & $4020$\\          
    \end{tabular}
  \end{center}
}

On the first line, we observe that Hilbert modular equations in Igusa
invariants for~$F = \Q(\sqrt{5})$ are indeed more expensive to
evaluate than their counterparts in Gundlach invariants.

\section{Perspectives}
\label{sec:implem}

In this final section, we sketch possible improvements to Elkies's
method for abelian surfaces as described above, following existing
works in the dimension~$1$ case. They would either reduce the constant
factors hidden in complexity estimates by large amounts, or introduce
exponential-time gains.

\subsection{Smaller modular equations}

The modular equations of Siegel and Hilbert type presented above are
higher-dimensional analogues of the classical modular
polynomials~$\Phi_\ell$ in dimension~$1$. It is well-known that other
kinds of modular polynomials provide explicit equations for
essentially the same modular curve which are much smaller, despite
sharing the same~$O(\ell^3\log\ell)$ size asymptotic: see for
instance~\cite[§3]{enge_ClassInvariantsCRT2010} and the data available
at~\cite{sutherland_DatabaseModularPolynomials}.
In the dimension~$2$ case, modular equations written in terms of theta
constants are considerably smaller than Siegel or Hilbert modular
equations as defined
above~\cite{milio_DatabaseModularPolynomials}. One can ask whether
this choice of coordinates is the optimal one.

More generally, it could well be that systems of equations of the
form~\eqref{eq:siegel-modeq} inherently force modular equations to
have large coefficients; other ways of describing the
diagram~\eqref{diag:siegel} might lead to smaller polynomials -- for
instance, a more intrinsic equation for~$\A_2^0(\ell)$ along with the
Atkin--Lehner involution exchanging~$p_1$ and~$p_2$. Such equations do
not even have to be defined by a formula valid for each~$\ell$; all we
need is an algorithm to compute such equations when~$\ell$ is given,
perhaps by computing a basis of Siegel modular forms of
level~$\Gamma^0(\ell)$, or Hilbert modular forms of
level~$\Gamma^0(\beta)$, on the fly.

\subsection{Other SEA strategies}

In the case of elliptic curves, there is more to the SEA algorithm
than applying Elkies's method to a series of distinct primes. We list
some of the possible improvements below. Due to the larger implied
constants in complexity estimates about modular equations, we expect
these improvements to have an even larger impact on practical running
times in higher dimensions.
\begin{enumerate}
\item \emph{Isogeny chains.} In favorable situations, modular
  polynomials of level~$\ell$ can be used to compute not only
  an~$\ell$-isogeny~$E\to E'$, but a chain
  of~$\ell$-isogenies~$E \to E_1\to \cdots \to E_r$ whose composition
  is an~$\ell^r$-isogeny, for
  some~$r\geq 2$~\cite{fouquet_IsogenyVolcanoesSEA2002}. This yields
  the value of~$\chi(E)$ modulo~$\ell^r$. In order to remain within
  the same complexity bound, one should take~$r$ no greater
  than~$\log\log(q)/\log(\ell) + O(1)$. The existence of an isogeny
  chain of the desired length over~$\F_q$ depends on the shape of the
  connected component of the~$\ell$-isogeny graph on which~$E$ lies.
  Note that a chain of length~$r=2$ can be constructed by evaluating
  modular polynomials only once, at~$E_1$.  In dimension~$2$, this
  strategy seems easier to apply in the Hilbert case, since isogeny
  graphs are still volcanoes in this
  case~\cite{ionica_IsogenyGraphsMaximal2020} and the composition of a
  non-backtracking chain of~$\beta$-isogenies will always be
  a~$\beta^r$-isogeny. This property does not hold
  for~$\ell$-isogenies in the Siegel case, and the shape of the
  associated isogeny graphs is also more
  complicated~\cite{brooks_IsogenyGraphsOrdinary2017}.
  
\item \emph{Atkin's method.} It is known that studying the
  factorization patterns of modular equation of level~$\ell$
  over~$\F_q$, even in the absence of rational roots, restrict the
  possible Frobenius eigenvalues
  modulo~$\ell$~\cite[§6]{schoof_CountingPointsElliptic1995},
  \cite{ballentine_IsogeniesPointCounting2016}. This allows one to
  take advantage of Atkin (i.e.~non-Elkies) primes as well. This
  information can be used at the end of the point-counting algorithm
  in an exponential-time sieve, whose practical effect is to reduce
  the number of necessary Elkies primes. In general, if we can
  compute~$n\geq 2$ possible values of~$\chi(A)$ modulo~$\ell$, the
  ``value'' of~$\ell$ as an Atkin prime is~$\log(\ell)/\log(n)$, and
  one should only keep the highest-valued primes for the final
  sieve. Thus, once a few Atkin primes have been collected, it only
  makes sense to look for low-degree factors of modular equations;
  this is cheaper than computing the full factorization.
  
\item \emph{Schoof's method.} When~$\ell = O(\sqrt{\log q})$ is not
  Elkies, it is usually more interesting to apply Schoof's original
  method to compute~$\chi(A)$ mod~$\ell$ than attempting to
  keep~$\ell$ as part of the Atkin data; this makes space for larger
  primes in the final sieve. If~$\ell$ is very small (for
  instance~$\ell=2$), then Schoof's method can also yield~$\chi(A)$
  modulo a suitable power
  of~$\ell$~\cite[§4]{gaudry_GenusPointCounting2012}.
\end{enumerate}

\bibliographystyle{abbrv}
\bibliography{counting-g2}

\end{document}